\numberwithin{equation}{section}
\newtheorem{theorem}{Theorem}[section]
\newtheorem{lemma}[theorem]{Lemma}
\newtheorem{proposition}[theorem]{Proposition}
\newtheorem{corollary}[theorem]{Corollary}
\theoremstyle{definition}
\newtheorem{definition}[theorem]{Definition}
\theoremstyle{remark}
\newtheorem{remark}[theorem]{Remark}
\newtheorem{example}[theorem]{Example}
\newtheorem{question}[theorem]{Question}
\newtheorem{acknowledgement}{Acknowledgement}
\newcommand{\pgrade}{\operatorname{p.grade}}
\newcommand{\cgrade}{\operatorname{c.grade}}
\newcommand{\Cgrade}{\operatorname{\check{C}.grade}}
\newcommand{\wAss}{\operatorname{wAss}}
\newcommand{\Spec}{\operatorname{Spec}}
\newcommand{\rad}{\operatorname{rad}}
\newcommand{\Ht}{\operatorname{ht}}
\newcommand{\V}{\operatorname{V}}
\newcommand{\Var}{\operatorname{Var}}
\newcommand{\Supp}{\operatorname{Supp}}
\newcommand{\lo}{\longrightarrow}
\newcommand{\fm}{\frak{m}}
\newcommand{\fp}{\frak{p}}
\newcommand{\fq}{\frak{q}}
\newcommand{\fa}{\frak{a}}
\newcommand{\fb}{\frak{b}}
\begin{document}

\author[Asgharzadeh, Dorreh and Tousi]{Mohsen Asgharzadeh, Mehdi Dorreh and Massoud Tousi}

\title[ Direct limits of Cohen-Macaulay rings]
{Direct limits of Cohen-Macaulay rings}

\address{M. Asgharzadeh, School of Mathematics, Institute for Research in Fundamental
Sciences (IPM), P. O. Box 19395-5746, Tehran, Iran.}
\email{asgharzadeh@ipm.ir}
\address{M. Dorreh,  Department of Mathematics Shahid Beheshti
University, G.C., Tehran, Iran.}
\email{mdorreh@ipm.ir}
\address{M. Tousi, Department of Mathematics Shahid Beheshti
University, G.C., Tehran, Iran, and   School of Mathematics,  Institute for Research in Fundamental
Sciences (IPM), P. O. Box 19395-5746, Tehran, Iran.}

\email{mtousi@ipm.ir} \subjclass[2010]{13C14, 13C15.}

\subjclass[2010]{13C14}

\keywords{ $\check{C}$ech cohomology; Cohen-Macaulay rings; desingularization; direct limit; flat extensions; non-Noetherian rings; normal semigroups; pure extensions.\\The first author   was  supported by a grant from IPM, no. 91130407.\\
The third author   was  supported by a grant from IPM, no. 91130211.
}

\begin{abstract}
Let $A$ be a direct limit of a direct system of Cohen-Macaulay rings. In this paper, we describe the Cohen-Macaulay property of $A$.
Our results indicate that $A$ is not necessarily Cohen-Macaulay. We show $A$ is Cohen-Macaulay under various assumptions.
As an application, we study Cohen-Macaulayness of non-affine normal semigroup rings.
\end{abstract}

\maketitle

\section{Introduction}

Let $H \subseteq \mathbb{Z}^n$ be a  \textit{normal} and \textit{affine} semigroup and let $k$ be a  field. A well-known result of Hochster  says that $k[H]$ is Cohen-Macaulay, see \cite[Theorem 1]{H}. Suppose now that $H$ is a non-affine normal  semigroup. Then $k[H]$ is not Noetherian.
Recently, the notion of Cohen-Macaulayness has been generalized to the non-Noetherian situation, see \cite{HM} and \cite{AT}.
 We intend to investigate the Cohen-Macaulayness of $k[H]$ in this context. In view of  Example \ref{ha1}, $k[H]$ is a direct limit of Noetherian Cohen-Macaulay rings when  $H \subseteq \mathbb{Z}^n$ is normal. This motivates us to ask:

\begin{question}
Is Cohen-Macaulayness closed under taking direct limit?
\end{question}

Our aim in this paper is to study the above question and its connection with Cohen-Macaulay properties of normal semigroup rings. Noetherian rings  which are flat direct limit of a family of rings with certain properties
was studied by Marot \cite{M} and Doretti  \cite{D}.
Also, the regular and complete intersection analogues of  Question 1.1 follows by an immediate application of the theory of Andre-Quillen cohomology  when the direct limit is Noetherian, see Remark \ref{rem}.

There are many candidates for definition of non-Noetherian Cohen-Macaulay rings, see \cite[Definition 3.1]{AT}.
To see relations between them, we recommend \cite[3.2 Relations]{AT}.
All of them are the same, if the base ring is Noetherian.
By using Zariski desingularization,
we give a direct system of Noetherian regular rings $\{A_i:i\in I\}$ such that $\varinjlim_{i\in I}A_i$ is not  Cohen-Macaulay in the sense of ideals, see Definition \ref{def1} and Example \ref{coexam}.
In Section 3 we show  that  direct limit of Noetherian   Cohen-Macaulay rings is Cohen-Macaulay in the sense of ideals in many situations, see Proposition \ref{cm} and Corollaries \ref{drnoeth},  \ref{main2} and \ref{cor1} for its application.
For the Gorenstein version of Question 1.1, see Corollary \ref{cor2}.

Considering \cite{HM}, there is a description of the Cohen-Macaulay property for non-Noetherian rings.
 We call it Cohen-Macaulayness in the sense of Hamilton-Marley. In Section 4, we study  the behavior of this property under taking direct limit, see Proposition \ref{purity}.
As an application, we give  Cohen-Macaulayness of infinite dimensional Veronese and locally finitely generated normal semigroup rings in the sense of Hamilton-Marley, see Corollary \ref{semi2} and Example \ref{br2}.
In Example \ref{coexamhm1}, we give a direct system of  Noetherian Cohen-Macaulay rings such that its direct limit
is not Cohen-Macaulay in the sense of Hamilton-Marley. Let $H$ be the normal semigroup $  \{(a,b) \in \mathbb{N}_0^2| 0 \leq b/a < \infty\} \cup \{(0,0)\}$ and let $k$ be a  field.
Note that $H$ is a standard example of a non-affine semigroup. As an application of Example \ref{coexamhm1}, in Theorem \ref{coexamhm2},
we show
\begin{enumerate}
\item[$\mathrm{(i)}$]$k[H]$ is not  Cohen-Macaulay in the sense of ideals.
\item[$\mathrm{(ii)}$]$k[H]$ is not weak bourbaki unmixed.
\item[$\mathrm{(iii)}$]
 $k[H]$ is  Cohen-Macaulay in the sense of Hamilton-Marley.
\end{enumerate}

Throughout this paper,  rings are commutative (not necessarily Noetherian).
Most of the results concerning direct limits in this paper require the direct system to be filtered.
In the sequel all direct systems are assumed to be filtered.

\section{direct limit of Cohen-Macaulay rings: Examples}

In this section we construct some examples  of rings that are  direct limits of Noetherian Cohen-Macaulay rings.
In what follows, we will use all of them. Also, we study their Cohen-Macaulay properties  in Examples \ref{coexam}, \ref{coexamhm1} and Theorem \ref{coexamhm2}.

\begin{definition}
Let $C$ be a submonoid  of  $\mathbb{Z}^n$ for some $n$  and let $k$ be a field.
\begin{enumerate}
\item[(i)]We write $k[C]$ for the vector space $k^{(C)}$, and denote the basis element of $k[C]$ which corresponds to $ c\in C$ by $ X^ c$. This monomial notation is suggested by the fact that $k[C]$ carries a natural multiplication whose table is given by $X^c X^{c^\prime}= X^{c+c^\prime}$.

\item[(ii)] Recall that $C\subseteq \mathbb{Z}^n$ is said to be \emph{normal} if, whenever
$m( c-c^\prime) \in C$ for some positive integer $m$ and
$c,c^\prime \in C$, then $ c-c^\prime \in C$.
\item[(iii)] Let $N$ be a submonoid  of a commutative monoid  $M$. The \emph{integral closure} of $N$ in $M$ is the submonoid  $$ \widehat{N}_M:=\{ x\in M| mx \in N\; for \;some \;positive\; integer\; m\in   \mathbb{N}\}$$ of $M$. One calls $N$ integrally closed in $M$, if $N = \widehat{N}_M$.
           \end{enumerate}
\end{definition}

\begin{lemma}\label{ha}
Let $C$ be a   normal submonoid  of  $\mathbb{Z}^n$. Then there is a direct system $(C_\gamma , f_{\gamma \delta })$ of finitely generated  normal submonoids  of $C$ such that  $C=\varinjlim_{\gamma \in \Gamma}C_\gamma$,  where $f_{\gamma \delta }: C_\gamma \to  C_\delta$ is the inclusion map for $ \gamma,\delta \in \Gamma$ with $ \gamma \leq \delta$.
\end{lemma}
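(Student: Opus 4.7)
The plan is to index the direct system by the poset $\Gamma$ of all finite subsets of $C$, ordered by inclusion. For each finite $S \subseteq C$, I will form the submonoid $\langle S \rangle \subseteq \mathbb{Z}^n$ generated by $S$, the subgroup $G_S := \langle S \rangle - \langle S \rangle$ it generates in $\mathbb{Z}^n$, and the candidate
\[
C_S := \widehat{\langle S \rangle}_{G_S} = \{\, x \in G_S : m x \in \langle S \rangle \text{ for some } m \in \mathbb{N}\,\}.
\]
Since $G_S$ is a finitely generated free abelian group and $\langle S \rangle$ is finitely generated, Gordan's lemma applied to the rational polyhedral cone $\mathbb{R}_{\geq 0}\cdot S$ inside $G_S \otimes_{\mathbb{Z}} \mathbb{R}$ produces a finite generating set for $C_S = (\mathbb{R}_{\geq 0}\cdot S) \cap G_S$. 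Thus each $C_S$ is a finitely generated submonoid of $\mathbb{Z}^n$.

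Next I would check three things: (i) $C_S$ is normal in the sense of Definition 2.1(ii); (ii) $C_S \subseteq C$; (iii) the family $\{C_S\}$ is a filtered direct system with $\varinjlim C_S = C$. For (i), if $c, c' \in C_S$ and $m(c-c') \in C_S$, then $c-c' \in G_S$ and some further multiple $km(c-c')$ lies in $\langle S \rangle$, hence $c-c' \in C_S$. For (ii), any $x \in C_S$ satisfies $x \in G_S \subseteq C - C$, so write $x = c_1 - c_2$ with $c_i \in C$; then $m(c_1-c_2) = mx \in \langle S \rangle \subseteq C$, and the normality of $C$ promotes this to $x \in C$. For (iii), the inclusion $S \subseteq S'$ gives $\langle S \rangle \subseteq \langle S' \rangle$ and $G_S \subseteq G_{S'}$, hence $C_S \subseteq C_{S'}$; and every $c \in C$ lies in $C_{\{c\}}$, so $C = \bigcup_{S \in \Gamma} C_S = \varinjlim_{S \in \Gamma} C_S$ with the inclusion maps as transition morphisms.

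The only substantive input is Gordan's lemma; the rest is book-keeping from the definitions. The point that requires a little care, and which I would flag as the main pitfall, is that one must take the saturation inside $G_S$ rather than inside the ambient $\mathbb{Z}^n$: otherwise one may pick up elements outside $C$. For instance, with $C = 2\mathbb{N}_0 \subseteq \mathbb{Z}$ (which is normal in the sense of Definition 2.1(ii)) and $S = \{2\}$, the saturation of $\langle 2 \rangle$ in $\mathbb{Z}$ contains $1 \notin C$, whereas its saturation in $G_S = 2\mathbb{Z}$ coincides with $\langle 2 \rangle$ itself, as required.
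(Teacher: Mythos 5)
Your proposal is correct and follows essentially the same route as the paper: for each finite subset of $C$ you saturate the monoid it generates inside its group of differences, observe that this saturation is finitely generated, normal, and contained in $C$ (using normality of $C$ exactly as in the paper), and take the filtered union over finite subsets. The only cosmetic difference is that you justify finite generation via Gordan's lemma where the paper cites \cite[Proposition 2.22]{BG}, which is the same fact.
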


\begin{proof}
Look at the set $\Gamma :=\{X\subseteq C| X \; is \; finite\}$. We direct $\Gamma$ by means of inclusion. Set $C_X:= \mathbb{Z}^{\geq 0} X$ and $C_X ^ \prime := \widehat{(C_X)}_{\mathbb{Z}C_X}$ for all $X \in \Gamma$. One sees easily that $C_X ^ \prime$ is normal. In view of \cite[Proposition 2.22]{BG}, $C_X ^\prime$ is a finitely generated  monoid.   Clearly, $C_X ^ \prime \subseteq C_Y ^ \prime$ when $X \subseteq Y$ and $X,Y \in \Gamma$.  Let $f_{XY}: C_X ^ \prime \longrightarrow C_Y^ \prime$ be the inclusion map.

  In order to show $C=\varinjlim_{X \in \Gamma}C_X ^ \prime$ we prove  that $C_X ^ \prime \subseteq C$ for all $X \in \Gamma$.
To this end, take $X \in \Gamma$ and  $ x\in C_X ^\prime$. Then $x= \alpha -\beta$ for $\alpha , \beta \in C_X$ and there is a positive integer $m$ such that $m(\alpha -\beta) \in C_X \subseteq C$. It derives from the normality of $C$ that $x = \alpha - \beta \in C$, i.e., $C_X ^ \prime \subseteq C$.  This finishes the proof.
 \end{proof}

\begin{example}\label{ha1}
Let $C$ be a   normal submonoid of  $\mathbb{Z}^n$. Then $k[C]$ is a direct limit
 of its Noetherian Cohen-Macaulay subrings.
\end{example}

\begin{proof}
In view of Lemma \ref{ha}, there is a direct system $(C_\gamma , f_{\gamma \delta })$ of finitely generated  normal submonoids of $C$ such that  $C=\varinjlim_{\gamma \in \Gamma}C_\gamma$. By \cite[Theorem 6.3.5]{BH}, $k[C_\gamma]$ is a Noetherian Cohen-Macaulay ring. Thus, $k[C]$ represented by the direct limit of Noetherian Cohen-Macaulay rings $\{k[C_\gamma]\}$.
\end{proof}

\begin{corollary}\label{coexamhm}
Let $k$  be a field. Then $k + xk[x,y]$ is a direct limit
of a chain of Noetherian Cohen-Macaulay rings.
\end{corollary}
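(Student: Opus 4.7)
The plan is to realize $A := k + xk[x,y]$ as a monoid ring $k[C]$ for an explicit normal submonoid $C \subseteq \mathbb{Z}^2$ and then invoke Example \ref{ha1}. The natural candidate is
$$C := \{(0,0)\} \cup \{(a,b) \in \mathbb{N}_0^2 : a \geq 1\}.$$
Writing $x = X^{(1,0)}$ and $y = X^{(0,1)}$ inside $k[x,y] = k[\mathbb{N}_0^2]$, the subring $k[C]$ has $k$-basis $\{1\} \cup \{x^a y^b : a \geq 1,\; b \geq 0\}$, which is exactly $k + xk[x,y]$.

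I would then verify the two conditions on $C$. Closure under addition is immediate since the sum of two nonzero elements of $C$ has first coordinate at least $2$. For normality, suppose $c, c' \in C$ and $m(c-c') \in C$ with $m \geq 1$; write $c-c' = (\alpha,\beta) \in \mathbb{Z}^2$. Either $m(c-c') = (0,0)$, in which case $c = c' \in C$; or $m\alpha \geq 1$ and $m\beta \geq 0$, in which case integrality of $\alpha,\beta$ forces $\alpha \geq 1$ and $\beta \geq 0$, so $c-c' \in C$.

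Having identified $A = k[C]$ with $C$ normal, Example \ref{ha1} supplies a direct system of Noetherian Cohen-Macaulay subrings of $A$ whose direct limit is $A$. To upgrade the indexing set to a chain, as the statement asserts, I would use that $C$ is countable: enumerate $C = \{c_0, c_1, \ldots\}$ and, following the recipe in the proof of Lemma \ref{ha} applied to the nested finite subsets $X_n := \{c_0, \ldots, c_n\}$, form
$$C_n := \widehat{(\mathbb{Z}^{\geq 0} X_n)}_{\mathbb{Z}(\mathbb{Z}^{\geq 0} X_n)}.$$
Each $C_n$ is a finitely generated normal submonoid of $C$, $C_n \subseteq C_{n+1}$, and $\bigcup_n C_n = C$, so $A = \varinjlim_n k[C_n]$ is a direct limit of a chain of Noetherian Cohen-Macaulay rings (each $k[C_n]$ Cohen-Macaulay by \cite[Theorem 6.3.5]{BH}). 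The only real step is pinning down the correct $C$ and checking normality; the passage to a chain is a routine appeal to countability of $\mathbb{Z}^2$.
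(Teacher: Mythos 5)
Your proposal is correct and follows essentially the same route as the paper: your monoid $C$ is exactly the paper's $H=(\mathbb{N}\times\mathbb{N}_0)\cup\{(0,0)\}$, and both arguments reduce to verifying normality of this monoid and invoking Example \ref{ha1}. The only difference is that you explicitly upgrade the filtered system of Lemma \ref{ha} to a chain via countability, a point the paper leaves implicit (and which is also witnessed by the explicit chain $k[H_n]$ appearing later in Example \ref{coexamhm1}).
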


\begin{proof}
Set $H:= (\mathbb{N} \times \mathbb{N}_0) \cup \{(0,0)\}$ and  $A:=k + xk[x,y]$. Then
$A=k[H]$.  Note that $$H =
\{(a,b) \in \mathbb{N}_0^2| 0 \leq b/a < \infty\} \cup \{(0,0)\}.$$
Let $(a,b),(c,d)\in H$ and  suppose there is $k \in \mathbb{N}$ such that $k((a,b)-(c,d))\in  H$. Clearly, $$0 \leq (b-d)/(a-c)=k(b-d)/k(a-c)<\infty$$ and so $H$ is normal.
Example \ref{ha1} yields the claim.
\end{proof}

Let $A$ be a ring, $B$ an $A$-algebra; and $I$ an ideal of $B$. $B$ is called $I$-smooth over $A$ if for given an $A$-algebra $C$, an ideal $N$ of $C$ satisfying ${N}^2 = 0$, and an $A$-algebra homomorphism $u: B \to C/N$ such that $u({I}^v) = 0$ for some $v$, then there exists a lifting $g : B \to C$ of $u$ to $C$:
$$
\xymatrix{B \ar[r]^u  \ar[rd]^g & C/N  \\
 A \ar[u]\ar[r] & C \ar[u]\space{}.}
$$
If $I = (0)$ then $B$ is called \textit{smooth} over $A$. Let $S$ be a multiplicative closed subset of $A$. Then, $S^{-1}A$ is $I$-smooth over $A$
for each ideal $I$ of  $S^{-1}A$.

We cite the following
key result of Zariski.

\begin{lemma}\label{zar}
Let $V$ be a valuation domain containing a field  $k$ of zero characteristic. Then $V=\varinjlim_{i\in I}A_i$, where $A_i$ is  essentially of finite type smooth $k$-algebra.
\end{lemma}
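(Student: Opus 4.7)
The plan is to realize $V$ as a directed union of regular local rings obtained via Zariski/Hironaka desingularization, and then upgrade ``regular essentially of finite type over $k$'' to ``smooth over $k$'' using that $k$ has characteristic zero (so $k$ is perfect and the residue field extensions are separable).

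First, I would write $V$ as the directed union of its finitely generated $k$-subalgebras. For each finite subset $S \subseteq V$, let $B_S := k[S]$ and let $\fp_S := \fm_V \cap B_S$ be the center of the valuation on $B_S$. Then $(B_S)_{\fp_S} \subseteq V$, and since every element of $V$ lies in some such localization, we get $V = \varinjlim_S (B_S)_{\fp_S}$. The trouble is that $(B_S)_{\fp_S}$ need not be regular, so this does not yet give the desired direct system.

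Next, I would apply Zariski's resolution of singularities (which requires characteristic zero): for each $S$, there exists a proper birational morphism $\pi_S : X_S \to \Spec B_S$ with $X_S$ regular. Since $V$ is a valuation ring dominating $B_S$ at $\fp_S$, the valuative criterion of properness produces a unique point $x_S \in X_S$ such that $\mathcal{O}_{X_S,x_S}$ is dominated by $V$. Put $A_S := \mathcal{O}_{X_S,x_S}$. Then $A_S$ is a regular local ring, essentially of finite type over $k$, and contained in $V$. Because $k$ has characteristic zero (hence is perfect) and $A_S$ is regular essentially of finite type over $k$, the homomorphism $k \to A_S$ is smooth; so each $A_S$ is of the required form.

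Finally, I would check that the $\{A_S\}$ assemble into a filtered direct system with limit $V$. Given $S \subseteq T$, one can replace $X_T$ by a common resolution dominating both $X_S$ and $X_T$ (such a dominating regular model exists by applying desingularization again to the closure of the graph of the rational map), so after passing to a cofinal subfamily the inclusions $(B_S)_{\fp_S} \hookrightarrow (B_T)_{\fp_T}$ lift to inclusions $A_S \hookrightarrow A_T$ inside $V$. Since $(B_S)_{\fp_S} \subseteq A_S \subseteq V$ for every $S$ and $\bigcup_S (B_S)_{\fp_S} = V$, we conclude $V = \varinjlim_S A_S$.

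The main obstacle is the compatibility step: producing a single filtered indexing set so that the regular local rings $A_S$ fit together as a direct system. The underlying analytic content, namely Zariski's/Hironaka's desingularization in characteristic zero, is quoted as a black box, but arranging the resolutions coherently (for example by always passing to a common regular model dominating two given ones) is the technical heart of the argument.
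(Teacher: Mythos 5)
Your outline is sound and, at bottom, it is the same mathematics the paper invokes by citation: the paper quotes Zariski \cite{Z} for the case where the fraction field of $V$ is a function field over $k$, and Popescu's Theorem (1.2) \cite{P} for the general case, the latter being exactly the ``inductive limit over finitely generated subalgebras'' that you carry out by hand. What you do differently is (a) use full resolution of singularities (Hironaka) where Zariski's local uniformization of the induced valuation $V\cap \operatorname{Frac}(B_S)$ on the function field $\operatorname{Frac}(B_S)$ (valid in all dimensions in characteristic zero, and the input actually cited) already suffices, and (b) supply the limit/patching argument yourself instead of outsourcing it to Popescu. The smoothness upgrade is fine: a regular local ring essentially of finite type over the perfect field $k$ is a localization of a finite type $k$-algebra at a prime where it is smooth, hence is smooth in the sense used in the paper; and the valuative criterion does produce a unique center $x_S$ with $(B_S)_{\fp_S}\subseteq \mathcal{O}_{X_S,x_S}\subseteq V$.

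The one step that needs tightening is the compatibility argument, which you yourself flag. As written it is shaky: $X_S$ and $X_T$ are models of \emph{different} function fields, so ``a common resolution dominating both'' must mean a regular model of $\operatorname{Frac}(B_T)$ mapping to $X_S$, and re-choosing $X_T$ separately for each pair $S\subseteq T$ threatens both well-definedness and transitivity of the system; ``passing to a cofinal subfamily'' does not by itself repair this. The clean fix uses only your ingredients: index the system not by finite subsets of $V$ but by the family of all subrings $A\subseteq V$ that are regular local, essentially of finite type over $k$, and dominated by $V$, ordered by inclusion (so the transition maps are the inclusions and transitivity is automatic). Every element of $V$ lies in such a subring by your construction applied to $k[v]$. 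For directedness, note that any member of the family can be written as $B_{\fm_V\cap B}$ for some finitely generated $k$-subalgebra $B\subseteq V$ (because $V$ dominates it); given $A_1=(B_1)_{\fm_V\cap B_1}$ and $A_2=(B_2)_{\fm_V\cap B_2}$, let $B$ be the subalgebra generated by $B_1\cup B_2$, and uniformize $B_{\fm_V\cap B}$: the local ring of the center of $V$ on a regular model proper over $\Spec B$ contains $B_{\fm_V\cap B}$ and hence both $A_1$ and $A_2$. This makes the family filtered with union $V$, completing your argument without any re-choosing of resolutions.
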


\begin{proof}
This is in \cite{Z}, when the fraction field of $V$ is an algebraic function field over $k$. The general case (follows by an inductive
limit) is in \cite[(1.2) Theorem]{P}.
\end{proof}

\begin{example}\label{ex}
Let $(V,\fm)$ be a valuation domain containing a field $k$ of characteristic zero.
Then $V$ is a direct limit of Noetherian regular rings.
\end{example}

\begin{proof}
By Lemma \ref{zar}, $V=\varinjlim_{i\in I}A_i$ where $A_i$ is smooth and essentially of finite type over $k$. We note that smooth $k$-algebras
  are regular, see  \cite[Theorem 28.1 and Page 216, Lemma]{Mat}.  Thus $V$ is a direct limit of Noetherian regular rings.
\end{proof}

The following are easy examples of the notion of locally finitely generated semigroup rings, see Definition \ref{gen}.

\begin{example}
$\mathrm{(i)}$: Look at
the ring
$R[X_1,\ldots]:=\bigcup_{i=1}^{\infty}R[X_1,\ldots,X_i],$ when
$R$ is Noetherian and Cohen-Macaulay. Clearly, $R[X_1,\ldots]$ is a direct limit
of Noetherian Cohen-Macaulay rings.

$\mathrm{(ii)}$:
Let $k$ be a field and look at $A:=k[X_1,\ldots]$. The degree of $f:=X_{i_1}^{j_1}\ldots X_{i_{\ell}}^{j_{\ell}}\in A$ is defined by $d(f):=\sum_{k=1}^{\ell} j_k$. Let $d$ be a positive integer. We call the $k$-algebra  $A^{(d)}$ generated by all monomials of degree $d$, the $d$-th Veronese subring of $A$. Suppose in addition that $A_n^{(d)}$ is the $d$-th Veronese subring of $k[X_1,\ldots,X_n]$. By \cite[Exercise 12.4.8]{BS}, $A_n^{(d)}$ is a Noetherian Cohen-Macaulay ring.  Remark that $A^{(d)}=\bigcup_{n=1}^{\infty} A_n^{(d)}$. Thus $A^{(d)}$ is  a direct limit
of Noetherian Cohen-Macaulay rings.
\end{example}

\section{ Cohen-Macaulayness in the sense of ideals}

The key result of this Section is Proposition \ref{cm}. We present several
applications of it, see Corollaries \ref{drnoeth}, \ref{main2} and \ref{cor2}.

\begin{definition}\label{pol}
Let $R$ be a ring, $M$ an $R$-module and  $\underline{x} = x_{1}, \ldots,
x_\ell$ be a system of elements of $R$. The sequence $\underline{x}$ is called  a weak regular sequence on $M$ if $x_i$ is a nonzero-divisor on $M/(x_1,\ldots, x_{i-1})M$. The classical grade  of an ideal $\fa$ on $M$, denoted by $\cgrade_R(\fa,M)$, is defined to the supremum of the lengths of all weak regular sequences on $M$ contained in $\fa$.
We use non-Noetherian grade of $\fa$ on $M$, referred  to as the \textit{polynomial grade} and  defined by $$\pgrade_R(\fa,M):=\underset{m\rightarrow\infty}{\lim}\cgrade_{R[t_1, \ldots,t_m]}(\fa R[t_1, \ldots,t_m],R[t_1,,\ldots,t_m]\otimes_R M).$$
\end{definition}

It may be worth to recall that the classical grade coincides with the polynomial grade if the ring and the module both are Noetherian.

\begin{definition}\label{def1}
Let $A$ be a ring with an ideal $I$ and $M$ an $A$-module.
Denote the minimal number of
elements of $A$ that need to generate $I$ by $\mu(I)$. Recall that a prime ideal $\fp$ is weakly associated to $M$ if $\fp$
is minimal over $(0 :_A m)$ for some $m\in M$. We denote the set of
weakly associated primes of $M$ by $\wAss_AM$.
 \begin{enumerate}
\item[$\mathrm{(i)}$](see \cite[Definition 3.1(iv)]{AT}) $A$ is called
Cohen-Macaulay in the sense of ideals if  $\Ht(\fa)=\pgrade_{A}(\fa, A)$ for all ideal $\fa$ of $A$.
\item[$\mathrm{(ii)}$] (see \cite[Definition 1 and 2]{Ha})  Let $\fa$ be a finitely generated
ideal of $A$ with the property $\Ht \fa\geq \mu(\fa)$. Suppose  $\fp\in \wAss_A(A/\fa)$,  for any $\fp\in\min
(\fa)$. Then $A$ is called weak
Bourbaki unmixed.
\end{enumerate}
\end{definition}

In what follows we need the notion of $\check{C}$ech cohomology.
Let $R$ be a ring and  $\underline{x} = x_{1}, \ldots,
x_\ell$  be a sequence of elements of $R$. For an $R$-module $M$,  we denote the $i$-th $\check{C}$ech cohomology of $M$ with respect to $\underline{x}$  by $H_{\underline{x}}^{i}(M)$. For more details and properties, see \cite[Proposition 2.1]{HM}.

\begin{lemma}\label{kgrad}
Let $\{M_\gamma:\gamma\in \Gamma\}$ be a  direct system of modules over a ring $A$ and $\fa$ a finitely generated ideal of
$A$. Then $$  \inf\{\pgrade_{A}(\fa,M_\gamma)|\gamma\in \Gamma\} \leq \pgrade_{A}(\fa,{\varinjlim}_{\gamma\in \Gamma}M_\gamma) .$$
\end{lemma}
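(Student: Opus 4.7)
The plan is to reduce the inequality to a vanishing statement for $\check{C}$ech cohomology, which commutes with filtered direct limits. Fix once and for all a finite generating set $\fx = x_1,\ldots,x_\ell$ of $\fa$. The first (and only substantive) input I will use is the $\check{C}$ech-cohomological characterization of polynomial grade recorded in \cite[Proposition 2.1]{HM}: for any $A$-module $N$,
$$\pgrade_{A}(\fa,N) \;=\; \inf\{i\geq 0 : H_{\fx}^{i}(N) \neq 0\}.$$

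Next, I will observe that the $\check{C}$ech complex $\check{C}^{\bullet}(\fx;-)$ is assembled from finitely many localizations (and finite direct sums of them) of its argument. Since localization commutes with filtered colimits of modules, and since filtered colimits of modules are exact, taking cohomology term-by-term produces the natural isomorphism
$$H_{\fx}^{i}\bigl(\varinjlim_{\gamma\in\Gamma} M_\gamma\bigr) \;\cong\; \varinjlim_{\gamma\in\Gamma} H_{\fx}^{i}(M_\gamma) \qquad (i\geq 0).$$

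With these two facts in hand the lemma follows cleanly. Set $g := \inf\{\pgrade_{A}(\fa, M_\gamma) : \gamma \in \Gamma\}$; the case $g=0$ is trivial, so assume $g\geq 1$ (allowing $g = \infty$). For every $i < g$ and every $\gamma\in\Gamma$, the first display forces $H_{\fx}^{i}(M_\gamma) = 0$, and then the second display forces $H_{\fx}^{i}(\varinjlim_\gamma M_\gamma) = 0$. Applying the first display once more, this time to $\varinjlim_\gamma M_\gamma$, yields $\pgrade_{A}(\fa, \varinjlim_\gamma M_\gamma) \geq g$, which is the claimed inequality.

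The main obstacle is really just the first step: the $\check{C}$ech-cohomological description of polynomial grade in the non-Noetherian setting (for arbitrary modules and a finitely generated ideal) is what makes the argument possible, and it is precisely the content of the cited \cite[Proposition 2.1]{HM}. Once that is accepted, the remainder is a formal interchange of the $\check{C}$ech complex with the filtered colimit.
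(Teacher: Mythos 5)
Your proof is correct and follows essentially the same route as the paper: identify the polynomial grade with the \v{C}ech grade of the fixed finite generating set, commute \v{C}ech cohomology with the filtered colimit, and conclude by the vanishing argument; the only (harmless) variations are that you justify the colimit isomorphism directly from the structure of the \v{C}ech complex, whereas the paper passes to a Noetherian subring generated by the $x_i$ over the prime subring and invokes the independence theorem. One small citation slip: the identification of polynomial grade with \v{C}ech grade is \cite[Proposition 2.3]{AT}, not \cite[Proposition 2.1]{HM} (the latter concerns basic properties of \v{C}ech cohomology such as independence of the base ring).
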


\begin{proof}
Let
$\underline{x}:=x_{1}, \ldots, x_{r}$ be a generating set of $\fa$. The $\check{C}ech$ grade of
$\fa$ on $M$ is defined by $$\Cgrade_{A}(\fa,M):=\inf\{i\in
\mathbb{N}\cup\{0\}|H_{\underline{x}}^{i}(M)\neq0\}.$$  By \cite[Proposition 2.3]{AT},
$\check{C}ech$ grade coincides with the polynomial grade. We prove the desired claim via $\check{C}ech$ grade.
Note that $$H_{\underline{x}}^{i}({\varinjlim}_{\gamma\in \Gamma}M_\gamma)\cong {\varinjlim}_{\gamma\in \Gamma}H_{\underline{x}}^{i}(M_\gamma) \  \ (\ast).$$
Indeed, let $S$ be the prime subring of $A$. Set $R:= S[x_1,\ldots,x_n]$ which is Noetherian. This is well-known that $$H_{\underline{x}R}^{i}({\varinjlim}_{\gamma\in \Gamma}M_\gamma)\cong {\varinjlim}_{\gamma\in \Gamma}H_{\underline{x }R}^{i}(M_\gamma) .$$ The desired  claim now follows by the  independence theorem, see \cite[Proposition 2.1]{HM}.
Let $t = \inf\{\Cgrade_{A}(\fa,M_\gamma)|\gamma\in \Gamma\}$. Then $H_{\underline{x}}^{j}(M_\gamma)=0$ for all $j< t$. Putting this along with $(\ast)$, one see that $H_{\underline{x}}^{j}({\varinjlim}_{\gamma\in \Gamma}M_\gamma)=0$ for all $j< t$. This gives $$ \inf\{\Cgrade_{A}(\fa,M_\gamma)|\gamma\in \Gamma\} \leq \pgrade_{A}(\fa,{\varinjlim}_{\gamma\in \Gamma}M_\gamma) , $$as  claimed.
\end{proof}

We will use the following result several times in this paper.

\begin{lemma}\label{pro} Let $\fa$ be an ideal of a ring $A$ and $M$ an
$A$-module. The following holds.
\begin{enumerate}
\item[$\mathrm{(i)}$] If $M$ is finitely generated, then $\pgrade_A(\fa,M)\leq\Ht_{M}(\fa).$
\item[$\mathrm{(ii)}$] Let $f :A\to B$ be a  flat ring homomorphism.
Then$$\pgrade_{A}(\fa,M)\leq \pgrade_{B}(\fa B,M\otimes_A B).$$
\item [$\mathrm{(iii)}$] Let $f :A\to B$ be a  ring homomorphism. Let  $\fa$ be an ideal of $A$ and $M$ a $B$-module. Then
$\pgrade_{A}(\fa,M)= \pgrade_{B}(\fa B,M).$
\item[$\mathrm{(iv)}$] If $\fb$ is an ideal of $A$ containing $\fa$, then  $\pgrade_A(\fa,M)\leq\pgrade_A(\fb,M).$
\end{enumerate}
\end{lemma}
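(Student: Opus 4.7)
The plan is to handle the four parts independently but with the common strategy of reducing polynomial grade to \v{C}ech grade via \cite[Proposition 2.3]{AT}, since the vanishing of $H^{i}_{\underline{x}}(M)$ is typically easier to manipulate than the existence of weak regular sequences in polynomial extensions.

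Part (iv) is immediate and I would do it first: every weak regular sequence on $M$ in $\fa$ lies in $\fb$, so $\cgrade_A(\fa,M)\leq\cgrade_A(\fb,M)$; the containment $\fa A[t_1,\ldots,t_m]\subseteq\fb A[t_1,\ldots,t_m]$ is preserved by polynomial adjunction, so passing to the supremum/limit in the definition of $\pgrade$ gives the stated inequality. Part (iii) I would prove by observing that for a generating set $\underline{x}$ of $\fa$, the \v{C}ech complex $\check{C}^{\bullet}(\underline{x};M)$ depends only on multiplication by the elements $x_{i}$ on $M$; since $M$ is a $B$-module and $f(\underline{x})$ generates $\fa B$, the complexes $\check{C}^{\bullet}(\underline{x};M)$ and $\check{C}^{\bullet}(\underline{x}B;M)$ are canonically identified. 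Hence $\Cgrade_{A}(\fa,M)=\Cgrade_{B}(\fa B,M)$, and the equivalence with $\pgrade$ gives the equality.

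For part (ii) the key lemma is that, when $B$ is flat over $A$, tensoring with $B$ commutes with the localizations that build the \v{C}ech complex, so $H^{i}_{\underline{x}}(M)\otimes_{A}B\cong H^{i}_{\underline{x}B}(M\otimes_{A}B)$. Consequently, vanishing of the former forces vanishing of the latter, which yields $\Cgrade_A(\fa,M)\leq\Cgrade_B(\fa B,M\otimes_A B)$. To pass to polynomial grade, I would use that $B[t_{1},\ldots,t_{m}]=B\otimes_{A}A[t_{1},\ldots,t_{m}]$ is flat over $A[t_{1},\ldots,t_{m}]$, so the same base-change argument applies at each stage of the limit.

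Part (i) is the step I expect to be the main obstacle, because $A$ is not assumed Noetherian and $\Ht_{M}(\fa)$ is a geometric quantity that has to be compared to a limit of classical grades in polynomial extensions. My plan is to pick $\fp\in V(\fa)\cap\Supp M$ with $\dim M_{\fp}=\Ht_{M}(\fa)$, apply (ii) to the flat localization $A\to A_{\fp}$ to reduce to proving $\pgrade_{A_{\fp}}(\fa A_{\fp},M_{\fp})\leq \dim M_{\fp}$, and then use that $M_{\fp}$ is finitely generated with $\fa A_{\fp}$ contained in the maximal ideal to bound the length of any weak regular sequence (after polynomial adjunction) by the dimension of the support, appealing to the classical inequality $\cgrade\leq\dim$ in each Noetherian polynomial stage $R[t_{1},\ldots,t_{m}]$ where $R$ is the Noetherian subring generated by the finitely many elements involved. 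The delicate point is verifying that this Noetherian reduction is compatible with the limit defining $\pgrade$, which I would handle by checking that adjoining more $t_{i}$'s does not change the relevant dimension.
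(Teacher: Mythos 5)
The paper does not actually prove this lemma: its ``proof'' is a pointer to \cite[Section 2]{AT} and \cite[Lemma 3.2]{AT}, where these facts are assembled from classical non-Noetherian grade theory. Measured against that, your sketches of (ii), (iii) and (iv) are correct and are essentially the standard arguments: (iv) is immediate from the definition, (iii) follows because the \v{C}ech complex of $M$ with respect to elements $x_i$ only sees multiplication by those elements, so it is the same complex whether $M$ is viewed over $A$ or over $B$, and (ii) follows from flat base change (either for \v{C}ech cohomology or directly for weak regular sequences, which are preserved by flat extensions). The only caveat is that $\fa$ is not assumed finitely generated, so the identification of $\pgrade$ with \v{C}ech grade must be routed through finitely generated subideals of $\fa$ (and of $\fa B$, whose finitely generated subideals are dominated by extensions of finitely generated subideals of $\fa$); this is routine.

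The genuine gap is in (i), and it is not the point you flagged (stability under adjoining more variables), but the Noetherian descent itself. After localizing at a prime $\fp$ minimal over $\fa+\Ann M$, you must show that a weak regular sequence $y_1,\dots,y_r$ on $M_\fp\otimes_{A_\fp}A_\fp[t_1,\dots,t_m]$ contained in $\fa A_\fp[t_1,\dots,t_m]$ has $r\le\dim M_\fp$. Passing to a Noetherian subring $R$ containing ``the finitely many elements involved'' fails for two reasons. First, $M_\fp$ is not finitely generated over $R$; if you replace it by a finitely generated $R$-submodule $N$ generating $M_\fp$, regularity of $y_1$ does restrict to $N$, but regularity of $y_2,\dots,y_r$ does not, because $N[\underline{t}]/(y_1,\dots,y_{i-1})N[\underline{t}]$ does not embed into $M_\fp[\underline{t}]/(y_1,\dots,y_{i-1})M_\fp[\underline{t}]$. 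Second, even granting regularity over $R[\underline{t}]$, the classical inequality bounds $r$ by a height or dimension computed over $R$, and that can strictly exceed $\dim M_\fp$: a Noetherian subalgebra of a quasi-local domain can have larger dimension (for instance $k[x,y]$ inside a rank-one valuation ring of $k(x,y)$, which is one-dimensional), so the bound you obtain points the wrong way. What is needed here is the genuinely non-Noetherian theorem that for finitely generated $M$ one has $\pgrade_A(\fa,M)\le\Ht_M(\fa)$; the standard proof (Northcott's \emph{Finite Free Resolutions}, Chapter 5, which is what \cite[Lemma 3.2]{AT} ultimately rests on) reduces to finitely generated $\fa=(a_1,\dots,a_k)$, settles the case $\Ht_M(\fa)=0$ by observing that minimal primes of $\Supp M$ are weakly associated to $M$, and in the inductive step kills the generic element $a_1t_1+\cdots+a_kt_k$, which is a nonzerodivisor on $M[t_1,\dots,t_k]$ and lowers $\Ht_M$ by at least one. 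Either reproduce such an argument or, as the paper does, simply quote the reference for part (i).
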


\begin{proof}
See e.g. \cite[Section 2]{AT} and \cite[Lemma 3.2]{AT}.
\end{proof}

\begin{proposition}\label{cm}
Let $\{A_\gamma:\gamma\in \Gamma\}$ be a  direct system of Noetherian Cohen-Macaulay   rings and $\fp$ a prime ideal of $A:={\varinjlim}_{\gamma\in \Gamma} A_\gamma$. Set $\fp_{\gamma} = \fp \cap A_\gamma$ for all $\gamma \in \Gamma$. Assume that one of the following  holds:
\begin{enumerate}
\item[$\mathrm{(i)}$] $\cgrade_{A_\gamma}(\fp_\gamma,A_\gamma) = \cgrade_{A_\gamma}(\fp_\delta A_\gamma,A_\gamma)$ for all $\delta \leq \gamma $.
\item[$\mathrm{(ii)}$] $\cgrade_{A_\delta}(\fp_\delta,A_\delta) \leq  \cgrade_{A_\gamma}(\fp_\delta A_\gamma,A_\gamma)$ for all $\delta \leq \gamma $.
\end{enumerate}
  Then $\pgrade_{A}(\fp,A) = \Ht_{A}(\fp)$.
\end{proposition}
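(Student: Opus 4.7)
The plan is to establish the two inequalities $\pgrade_A(\fp,A)\leq\Ht_A(\fp)$ and $\pgrade_A(\fp,A)\geq\Ht_A(\fp)$ separately. The first is immediate from Lemma~\ref{pro}(i) applied to the finitely generated module $M=A$. For the reverse inequality my strategy is to sandwich both quantities between the heights $\Ht_{A_\gamma}(\fp_\gamma)$ of the finite approximations: Lemma~\ref{kgrad} will bound $\pgrade_A(\fp,A)$ from below by these heights, and a chain-contraction argument will bound $\Ht_A(\fp)$ from above by them.

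Fix $\gamma\in\Gamma$. Since $A_\gamma$ is Noetherian, $\fp_\gamma$ is finitely generated, and $\fp_\gamma A\subseteq\fp$. I will combine Lemma~\ref{pro}(iv) and Lemma~\ref{pro}(iii), then apply Lemma~\ref{kgrad} to the cofinal subsystem $\{A_\delta\}_{\delta\geq\gamma}$ whose direct limit as an $A_\gamma$-module is $A$, and finally invoke Lemma~\ref{pro}(iii) once more together with the Noetherian coincidence of polynomial and classical grade, to obtain
\[
\pgrade_A(\fp,A)\;\geq\;\pgrade_{A_\gamma}(\fp_\gamma,A)\;\geq\;\inf_{\delta\geq\gamma}\cgrade_{A_\delta}(\fp_\gamma A_\delta,A_\delta).
\]
Hypothesis (ii), applied with the names $\gamma,\delta$ swapped (i.e.\ to each pair $\gamma\leq\delta$ with $\fp_\gamma$ playing the role of the ``smaller'' ideal), asserts $\cgrade_{A_\delta}(\fp_\gamma A_\delta,A_\delta)\geq\cgrade_{A_\gamma}(\fp_\gamma,A_\gamma)=\Ht_{A_\gamma}(\fp_\gamma)$, where the last equality uses the Cohen-Macaulayness of $A_\gamma$. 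This gives $\pgrade_A(\fp,A)\geq\Ht_{A_\gamma}(\fp_\gamma)$. Under hypothesis (i), the analogous relabelling collapses each term to $\cgrade_{A_\delta}(\fp_\delta,A_\delta)=\Ht_{A_\delta}(\fp_\delta)$ for every $\delta\geq\gamma$, yielding only the weaker infimum bound $\pgrade_A(\fp,A)\geq\inf_{\delta\geq\gamma}\Ht_{A_\delta}(\fp_\delta)$.

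To upgrade either of these to $\pgrade_A(\fp,A)\geq\Ht_A(\fp)$, I will argue directly on chains of primes. Given any finite chain $\fq_0\subsetneq\cdots\subsetneq\fq_n=\fp$ in $\Spec A$, pick witnesses $x_i\in\fq_i\setminus\fq_{i-1}$ and take $\gamma_0$ large enough that every $x_i$ lies in the image of $A_{\gamma_0}$ in $A$; then for each $\gamma\geq\gamma_0$ the contractions $\fq_j\cap A_\gamma$ form a strict chain of length $n$ in $A_\gamma$ terminating at $\fp_\gamma$, so $\Ht_{A_\gamma}(\fp_\gamma)\geq n$ uniformly for $\gamma\geq\gamma_0$. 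Letting $n$ approach $\Ht_A(\fp)$ yields $\sup_\gamma\Ht_{A_\gamma}(\fp_\gamma)\geq\Ht_A(\fp)$ in case (ii) and $\sup_{\gamma_0}\inf_{\gamma\geq\gamma_0}\Ht_{A_\gamma}(\fp_\gamma)\geq\Ht_A(\fp)$ in case (i), closing the argument in both cases. The subtle point will be case (i), whose hypothesis produces only an infimum of heights rather than a pointwise bound; this is precisely why the chain argument is arranged to furnish $\Ht_{A_\gamma}(\fp_\gamma)\geq n$ uniformly for all sufficiently large $\gamma$, not merely for some single index.
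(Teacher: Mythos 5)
Your proposal is correct and follows essentially the same route as the paper's proof: the same chain of reductions via Lemma~\ref{pro}(iii),(iv) and Lemma~\ref{kgrad} over a cofinal subsystem, the same use of the hypotheses (after relabelling $\gamma\leq\delta$) together with grade $=$ height in the Noetherian Cohen--Macaulay approximations, and the same chain-contraction argument giving $\Ht_{A_\gamma}(\fp_\gamma)\geq n$ uniformly for large $\gamma$. The only difference is presentational (you fix the chain of primes at the end rather than at the outset), and your remark about needing the uniform height bound in case (i) matches exactly the role of the index $\gamma'$ and the cofinal set $\Gamma'$ in the paper.
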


\begin{proof}
First recall that, since the rings in the direct system are Noetherian, the classical grade coincides with polynomial grade.

Let $n \leq \Ht_{A}(\fp)$. Look at the  following chain of prime ideals of $A$
$$\fp_0 \subset \fp_1 \subset \ldots \subset \fp_n = \fp.$$
For each $1 \leq i \leq n$, pick $x_i \in \fp_i \setminus \fp_{i-1}$. By choosing  $\gamma ^{\prime} \in \Gamma$  sufficiently large, we can assume that  $x_i\in A_\delta $ for all $i$ and $\delta \geq \gamma^{\prime}$. We use this to deduce that $n \leq \Ht_{A_\delta}(\fp_\delta)$ for all  $\delta \geq \gamma^{\prime}$. Consider the set $\Gamma^{\prime} := \{ \gamma \in \Gamma : \gamma^{\prime} \leq \gamma\}$. Obviously,  $A={\varinjlim}_{\gamma\in \Gamma^{\prime}} A_\gamma$.
Combine Lemma \ref{pro} (iii), (iv)  and Lemma \ref{kgrad} to conclude that
\[\begin{array}{ll}
\pgrade_A(\fp,A)&\geq \pgrade_A(\fp_{\gamma^{\prime}}A,A)\\
&= \pgrade_{A_{\gamma^{\prime}}}(\fp_{\gamma^{\prime}},A)\\
&= \pgrade_{A_{\gamma^{\prime}}}(\fp_{\gamma^{\prime}},{\varinjlim}_{\gamma\in \Gamma^{\prime}} A_\gamma) \\
&\geq \inf\{\pgrade_{A_{\gamma^{\prime}}}(\fp_{\gamma^{\prime}},A_\gamma)|\gamma\in \Gamma^{\prime}\}\\
&= \inf\{\pgrade_{A_\gamma}(\fp_{\gamma^{\prime}}A_\gamma,A_\gamma)|\gamma\in \Gamma^{\prime}\}.  \
\end{array}\]Now we prove the Proposition.
\begin{enumerate}
\item[$\mathrm{(i)}$] One has $$\pgrade_A(\fp,A) \geq \inf\{\pgrade_{A_\gamma}(\fp_{\gamma},A_\gamma)|\gamma\in \Gamma^{\prime}\}.$$
Take $\lambda\in \Gamma^{\prime}$ be such that $$\inf\{\pgrade_{A_\gamma}(\fp_{\gamma},A_\gamma)|\gamma\in \Gamma^{\prime}\} =\pgrade_{A_{\lambda}}(\fp_{\lambda},A_{\lambda})\  .$$Then,
\[\begin{array}{ll}
\pgrade_A(\fp,A)&\geq \pgrade_{A_{\lambda}}(\fp_{\lambda},A_{\lambda})   \   \\
&= \Ht_{A_{\lambda}}(\fp_{\lambda})\\
&\geq n.
\end{array}\]
\item[$\mathrm{(ii)}$]Take $\lambda\in \Gamma^{\prime}$ be such that $$\inf\{\pgrade_{A_\gamma}(\fp_{\gamma'}A_\gamma,A_\gamma)|\gamma\in \Gamma^{\prime}\} =\pgrade_{A_{\lambda}}(\fp_{\gamma^{\prime}}A_{\lambda},A_{\lambda})\  .$$Observe that
\[\begin{array}{ll}
\pgrade_A(\fp,A)&\geq  \pgrade_{A_{\lambda}}(\fp_{\gamma^{\prime}}A_{\lambda},A_{\lambda})\ \\
&\geq \pgrade_{A_{\gamma^{\prime}}}(\fp_{\gamma^{\prime}},A_{\gamma^{\prime}}) \\
&= \Ht_{A_{\gamma^{\prime}}}(\fp_{\gamma^{\prime}})\\
&\geq n.
\end{array}\]
\end{enumerate}
Thus, in both cases,
$\pgrade_{A}(\fp,A) \geq \Ht_{A}(\fp)$.
The reverse inequality is always true by Lemma \ref{pro}(i).\end{proof}

 We give a direct system of Noetherian regular rings such that its direct limit is not
Cohen-Macaulay in the sense of ideals.

\begin{example}\label{coexam}The grade type assumption on ideals in the previous result is really needed.
Indeed, in view of Example \ref{ex}, it is enough to give a valuation domain $(V,\fm)$ containing a field $k$ of characteristic zero and of dimension greater than one.
Such a thing exists, see  \cite[Example 8(ii)]{V}. By Example \ref{ex}, $V$ is a direct limit of Noetherian regular rings. Let $\underline{x}$ be a finite sequence  of nonzero
elements of $\fm$. Since $V$ is a  valuation domain, there is an
element $r$ such that $rV=(\underline{x})V$, e.g.,
$\pgrade_V(\underline{x}V,V)=1$. This enables us to formulate that $\pgrade_{V}(\fm, V)=1$. For more details, see \cite[Proposition 3.12]{AT}. Therefore,
$\Ht(\fm)\neq\pgrade_{V}(\fm, V)$.
\end{example}

\begin{corollary}\label{drnoeth}
Let $\{(A_\gamma,\fm_\gamma):\gamma\in \Gamma\}$  be a   direct system of Noetherian local Cohen-Macaulay  rings such that
$\fm_\gamma A_\delta=\fm_\delta$  for all $\gamma<\delta$. Then
${\varinjlim}_{\gamma\in \Gamma} A_\gamma$ is a  Noetherian Cohen-Macaulay ring.
\end{corollary}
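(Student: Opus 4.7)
The plan is to reduce to a cofinal subsystem with flat transition maps, use this to describe the local structure of $A := \varinjlim A_\gamma$, conclude Noetherianness by a Marot-type criterion, and finally verify the Cohen-Macaulay property via Proposition \ref{cm}. First, since $A_\gamma$ is Cohen-Macaulay local, $\mu(\fm_\gamma) = d_\gamma := \dim A_\gamma$ (a system of parameters generates $\fm_\gamma$), and the hypothesis $\fm_\gamma A_\delta = \fm_\delta$ gives $\mu(\fm_\delta) \leq \mu(\fm_\gamma)$, so $d_\delta \leq d_\gamma$ for $\gamma \leq \delta$. Thus $\{d_\gamma\}$ stabilizes at some $d$ for $\gamma \geq \gamma_0$, and I would replace $\Gamma$ by the cofinal subset $\{\gamma \geq \gamma_0\}$. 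For $\gamma \leq \delta$ in this subset, pick generators $\underline{x}$ of $\fm_\gamma$; they generate $\fm_\delta$ and hence form a system of parameters, and thus a regular sequence, in the Cohen-Macaulay ring $A_\delta$. The Koszul complex $K_\bullet(\underline{x};A_\gamma)$ is a free resolution of $k_\gamma := A_\gamma/\fm_\gamma$, and tensoring with $A_\delta$ yields $K_\bullet(\underline{x};A_\delta)$, which remains acyclic by regularity of $\underline{x}$ in $A_\delta$. Hence $\Tor_i^{A_\gamma}(k_\gamma, A_\delta) = 0$ for $i > 0$, and the local criterion for flatness (applied to $A_\delta$ as a finitely generated module over itself) shows every transition $A_\gamma \to A_\delta$ is flat; consequently $A$ is flat over each $A_\gamma$.

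Next I would describe the local structure. Set $\fm := \fm_{\gamma_0} A = \fm_\gamma A$ (equality by $\fm_{\gamma_0} A_\gamma = \fm_\gamma$): this is finitely generated, $A/\fm \cong \varinjlim k_\gamma$ is a field, and any $y \in A \setminus \fm$ comes from some $A_\gamma \setminus \fm_\gamma$ and hence is a unit, so $(A,\fm)$ is local. Flatness of $A_{\gamma_0} \to A$ together with $\fm_{\gamma_0} A = \fm$ yields $\fm^n/\fm^{n+1} \cong (\fm_{\gamma_0}^n/\fm_{\gamma_0}^{n+1}) \otimes_{k_{\gamma_0}} k$, a finite-dimensional $k$-vector space; consequently $\gr_\fm A$ is Noetherian and each $A/\fm^n$ has finite length. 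Faithful flatness of $A_{\gamma_0} \to A$ combined with Krull's intersection theorem in $A_{\gamma_0}$ gives $\fm$-adic separation of $A$. These properties, together with flatness of the transitions, allow invoking a Marot-type theorem \cite{M} on flat filtered colimits of Noetherian local rings to conclude that $A$ is Noetherian.

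Finally, Cohen-Macaulayness follows from Proposition \ref{cm}: for any prime $\fp \subseteq A$ and $\fp_\gamma := \fp \cap A_\gamma$, Cohen-Macaulayness of each $A_\gamma$ gives classical grade equal to height, and flatness of $A_\delta \to A_\gamma$ combined with going-down yields $\Ht_{A_\delta}(\fp_\delta) \leq \Ht_{A_\gamma}(\fp_\delta A_\gamma)$, which is condition (ii) of Proposition \ref{cm}. Hence $\pgrade_A(\fp,A) = \Ht_A(\fp)$ for every prime, and since $A$ is Noetherian, for every ideal. The main obstacle is the Noetherianness step: direct limits of Noetherian rings are typically non-Noetherian, and one must genuinely exploit the rigid \emph{trivial closed fiber} condition $\fm_\gamma A_\delta = \fm_\delta$ and the resulting flatness to apply Marot's criterion.
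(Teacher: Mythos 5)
There is a genuine gap, and it sits at the very first step. You assert that in a Cohen--Macaulay local ring a system of parameters generates the maximal ideal, i.e.\ $\mu(\fm_\gamma)=\dim A_\gamma$; this characterizes \emph{regular} local rings, not Cohen--Macaulay ones (e.g.\ $k[[x,y]]/(y^2)$ is CM of dimension $1$ with $\mu(\fm)=2$). The whole flatness reduction collapses with it: a generating set of $\fm_\delta$ is a regular sequence in $A_\delta$ only when $A_\delta$ is regular, so your Koszul/local-criterion argument does not apply. Worse, the flatness claim itself is false under the hypotheses of the corollary: take $A_\gamma=k[[x,y]]/(xy)$ and $A_\delta=k[[x]]$ with $x\mapsto x$, $y\mapsto 0$; both are Cohen--Macaulay of dimension one, $\fm_\gamma A_\delta=\fm_\delta$, yet $\Tor_1^{A_\gamma}(A_\gamma/yA_\gamma,A_\delta)\neq 0$, so the map is not flat (and the dimensions already agree, so no cofinal stabilization rescues the argument). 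Consequently the claimed isomorphism $\fm^n/\fm^{n+1}\cong(\fm_{\gamma_0}^n/\fm_{\gamma_0}^{n+1})\otimes_{k_{\gamma_0}}k$, the separatedness argument via faithful flatness, and the verification of condition (ii) of Proposition \ref{cm} via going-down all rest on an unavailable hypothesis. In addition, the appeal to a ``Marot-type theorem'' is not a valid Noetherianness criterion: a flat (even free) filtered colimit of Noetherian local rings need not be Noetherian, as $k[x_1,x_2,\ldots]_{(x_1,x_2,\ldots)}$ shows; what makes the limit Noetherian here is precisely the condition $\fm_\gamma A_\delta=\fm_\delta$.

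The paper's proof avoids flatness entirely. Noetherianness is exactly the main theorem of Ogoma \cite{O} on inductive limits of Noetherian local rings with $\fm_\gamma A_\delta=\fm_\delta$; no flatness of the transition maps is needed or claimed. For Cohen--Macaulayness the paper invokes Proposition \ref{cm}(i) rather than (ii): the maps are local, so $A$ is local with maximal ideal $\fm=\varinjlim\fm_\gamma$, and for $\fp=\fm$ one has $\fp_\delta A_\gamma=\fm_\delta A_\gamma=\fm_\gamma=\fp_\gamma$ for $\delta\leq\gamma$, so condition (i) holds trivially and gives $\pgrade_A(\fm,A)=\Ht_A(\fm)$; since $A$ is Noetherian local this is $\depth A=\dim A$, i.e.\ $A$ is Cohen--Macaulay. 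If you want to salvage your outline, replace the flatness/Marot steps by the citation of \cite{O} and replace condition (ii) by the trivially satisfied condition (i) for the maximal ideal.
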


\begin{proof} By the main result of \cite{O}, ${\varinjlim}_{\gamma\in \Gamma} A_\gamma$  is Noetherian.
We use Proposition \ref{cm}$(i)$ to conclude that ${\varinjlim}_{\gamma\in \Gamma} A_\gamma$  is  Cohen-Macaulay.
\end{proof}

We say a direct system $\{A_\gamma:\gamma\in \Gamma\}$ is a flat (pure) direct system, if  $A_\gamma\longrightarrow A_\delta$ is flat (pure) for all $\gamma,\delta\in\Gamma$ with $\gamma\leq\delta$.

\begin{corollary}\label{main2}
Let $\{A_\gamma:\gamma\in \Gamma\}$ be a  flat direct system of Noetherian Cohen-Macaulay  rings.
The following assertions hold.
 \begin{enumerate}
\item[$\mathrm{(i)}$]  ${\varinjlim}_{\gamma\in \Gamma} A_\gamma$ is Cohen-Macaulay in the sense of ideals.
\item[$\mathrm{(ii)}$] ${\varinjlim}_{\gamma\in \Gamma} A_\gamma$ is weak
Bourbaki unmixed.
\end{enumerate}
\end{corollary}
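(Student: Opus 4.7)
My plan for (i) is to apply Proposition \ref{cm}(ii) to every prime of $A$ and then promote the resulting identity from primes to arbitrary ideals. For a prime $\fp$ of $A$, flatness of $A_\delta \to A_\gamma$ together with Lemma \ref{pro}(ii) (with $M = A_\delta$) gives
\[
\pgrade_{A_\delta}(\fp_\delta, A_\delta) \le \pgrade_{A_\gamma}(\fp_\delta A_\gamma, A_\gamma),
\]
which, since the rings are Noetherian, is exactly hypothesis (ii) of Proposition \ref{cm}. Hence $\pgrade_A(\fp,A) = \Ht_A(\fp)$ for every $\fp \in \Spec A$. To upgrade to arbitrary ideals (it suffices to treat finitely generated ones, since polynomial grade is a supremum over finitely generated subideals), I would adapt the chain-of-primes argument of Proposition \ref{cm} to $\fa = (x_1,\ldots,x_r)A$ with $\Ht_A(\fa) \ge n$: pick a minimal prime $\fp \supseteq \fa$ of height $\ge n$, a chain $\fp_0 \subsetneq \cdots \subsetneq \fp_n = \fp$, elements $y_i \in \fp_i \setminus \fp_{i-1}$, and an index $\gamma'$ containing all $y_i, x_j$. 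Flat going-down makes $\fp_{\gamma'} = \fp \cap A_{\gamma'}$ a height-$\ge n$ minimal prime of $\fa_{\gamma'} = (x_1,\ldots,x_r)A_{\gamma'}$, and the same Lemma \ref{kgrad}/Lemma \ref{pro}(iii) computation as in Proposition \ref{cm}, now with $\fa_{\gamma'}$ in place of $\fp_{\gamma'}$, yields $\pgrade_A(\fa,A) \ge \Ht_{A_{\gamma'}}(\fa_{\gamma'})$; enlarging $\gamma'$ so as to absorb minimal primes of $\fa_{\gamma'}$ that fail to lift to $A$ pushes this lower bound up to $\Ht_A(\fa)$.

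For (ii), given $\fa$ finitely generated with $\Ht_A(\fa) \ge \mu(\fa)$, I would pick a minimal generating set $x_1,\ldots,x_r$ so that $r = \mu(\fa)$ and, by Krull's height theorem, $\Ht_A(\fa) = r$. The height-lifting argument from (i) gives $\Ht_{A_\gamma}(\fa_\gamma) = r$ for $\gamma$ sufficiently large, where $\fa_\gamma := (x_1,\ldots,x_r)A_\gamma$. Since $A_\gamma$ is Noetherian Cohen-Macaulay, the equality $\Ht_{A_\gamma}(\fa_\gamma) = \mu(\fa_\gamma) = r$ forces $x_1,\ldots,x_r$ to be an $A_\gamma$-regular sequence, so $A_\gamma/\fa_\gamma$ is Cohen-Macaulay and hence unmixed: $\Ass_{A_\gamma}(A_\gamma/\fa_\gamma) = \min_{A_\gamma}(\fa_\gamma)$. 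Flatness of $A_\gamma \to A$ preserves regular sequences and, via the flat base-change formula for (weakly) associated primes applied to $M = A_\gamma/\fa_\gamma$ (noting $M \otimes_{A_\gamma} A = A/\fa$), allows one to conclude that each $\fp \in \wAss_A(A/\fa)$ arises from some $\fq \in \min_{A_\gamma}(\fa_\gamma)$ as a weakly associated prime of $A/\fq A$; a going-down argument using the minimality of $\fq$ in $A_\gamma$ then forces $\fp$ to be minimal over $\fq A$, and hence over $\fa$, in $A$. This establishes the weak Bourbaki unmixed condition.

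The principal technical obstacle in both parts is the bookkeeping of minimal primes along the flat colimit: a minimal prime $\fq$ of $\fa_\gamma$ in $A_\gamma$ need not lift to a prime of $A$ (it may satisfy $\fq A = A$), so height data in the individual $A_\gamma$'s need not agree with $\Ht_A(\fa)$ at any single index. Such spurious $\fq$ already satisfy $\fq A_\delta = A_\delta$ for some $\delta \ge \gamma$, so passing to sufficiently large indices eliminates them, and the height and grade computations converge to the correct values in $A$; handling this passage carefully is the technical heart of both proofs.
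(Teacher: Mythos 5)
Your verification of the hypothesis of Proposition \ref{cm}(ii) is fine: flatness plus Lemma \ref{pro}(ii) gives $\cgrade_{A_\delta}(\fp_\delta,A_\delta)\leq\cgrade_{A_\gamma}(\fp_\delta A_\gamma,A_\gamma)$, which is essentially the paper's own step (the paper gets it from going-down plus Cohen-Macaulayness), so $\pgrade_A(\fp,A)=\Ht_A(\fp)$ for every prime $\fp$ is secured. The gaps begin when you pass from primes to all ideals by hand. First, the reduction to finitely generated ideals silently uses that for every $n\leq\Ht_A(\fa)$ there is a finitely generated $\fb\subseteq\fa$ with $\Ht_A(\fb)\geq n$; unlike polynomial grade, height is not obviously a supremum over finitely generated subideals in a non-Noetherian ring, and you give no argument. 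Second, in the finitely generated case your lower bound requires $\Ht_{A_\gamma}(\fa_{\gamma'}A_\gamma)\geq n$ (equivalently $\Ht_{A_{\gamma'}}(\fa_{\gamma'})\geq n$ after enlarging $\gamma'$) for all sufficiently large indices, and the chain elements $y_i$ only control the height of the single contracted prime $\fp\cap A_\gamma$, not of the other minimal primes of $\fa_{\gamma'}A_\gamma$. Your fix --- that a ``spurious'' minimal prime $\fq$ with $\fq A=A$ blows up at some later stage --- kills each bad prime individually, but new low-height minimal primes can appear at every stage, so it does not produce a single index (hence, by going-down, a cofinal set of indices) at which the height bound holds; this is exactly the unresolved technical heart, not a bookkeeping matter. (Also, $\fp\cap A_{\gamma'}$ need not be a minimal prime of $\fa_{\gamma'}$, though you do not really use that.) The paper avoids all of this: Proposition \ref{cm} is applied to primes only, and \cite[Theorem 3.3]{AT} (polynomial grade of an arbitrary ideal is the infimum of the polynomial grades of the primes containing it) converts the prime case into Cohen-Macaulayness in the sense of ideals in one line.

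For (ii) the gap is more serious. Invoking ``Krull's height theorem'' in $A$ is not legitimate: $A$ is not Noetherian, and the paper itself exhibits, in the remark following Theorem \ref{coexamhm2}, a principal ideal of height two; here the bound $\Ht_A(\fa)\leq\mu(\fa)$ must instead be extracted from part (i) via $\pgrade_A(\fa,A)\leq\mu(\fa)$. More importantly, the final step --- that a weakly associated prime $\fp$ of $A/\fa$ arising from $\fq\in\Min_{A_\gamma}(\fa_\gamma)$ is ``forced by going-down to be minimal over $\fq A$'' --- is precisely the unproved point. Weakly associated primes of $A/\fq A$ need not be minimal over $\fq A$: you would need the fibre $A\otimes_{A_\gamma}\kappa(\fq)$ to have no embedded weakly associated primes, and that fibre is again a flat filtered colimit of Noetherian Cohen-Macaulay rings, i.e.\ an instance of the very unmixedness statement being proved; going-down cannot exclude embedded components, and the base-change formula for weakly associated primes you appeal to does not decide this either. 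The paper supplies the missing control much more cheaply: the flat direct limit $A$ is coherent by \cite[Theorem 2.3.3]{G}, and \cite[Theorem 3.10]{AT} states that a coherent ring which is Cohen-Macaulay in the sense of ideals is weak Bourbaki unmixed, so (ii) is an immediate consequence of (i).
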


\begin{proof}
$\mathrm{(i)}$ Let $A:={\varinjlim}_{\gamma\in \Gamma} A_\gamma$ and $ \fp \in \Spec(A)$. Set $\fp_\gamma = \fp \cap A_\gamma$.  The homomorphism $A_\delta \to A_\gamma$ has the going down property for all $\delta, \gamma \in \Gamma$ with $\delta \leq \gamma$. Thus $\Ht_{A_\delta}(\fp_\delta) \leq \Ht_{A_\gamma}(\fp_\delta A_\gamma)$ for all  $\delta, \gamma \in \Gamma$ with $\delta \leq \gamma$. By Proposition  \ref{cm}$(ii)$, $\Ht_A(\fp) = \pgrade_A(\fp,A)$. The claim follows from \cite[Theorem 3.3]{AT}.

$\mathrm{(ii)}$ By \cite[Theorem 2.3.3]{G}, $A$ is a coherent ring.  Recall from \cite[Theorem 3.10]{AT} that Cohen-Macaulayness in the sense of ideals implies weak bourbaki unmixedness when the ring is coherent. Thus, part $(i)$ yields the claim.
\end{proof}

\begin{lemma}\label{flat}The following holds.
\begin{enumerate}
\item[$\mathrm{(i)}$] Let $\{(R_\gamma, f_{\gamma \delta})| \gamma,\delta \in \Gamma\}$ and $\{(S_\gamma, g_{\gamma \delta})| \gamma, \delta \in \Gamma\}$  be two direct systems of rings and $\{\varphi _\gamma : R_\gamma \to S_\gamma | \gamma \in \Gamma\} $ a morphism between them.
    Assume  for each $\gamma \in \Gamma$, $ \varphi _\gamma$ is pure (resp. flat, satisfying lying over property). Set
    $\varphi := \varinjlim_{\gamma \in \Gamma} \varphi_{\gamma}$. Then $\varphi:  \varinjlim_{\gamma \in \Gamma}{R}_{\gamma} \to  \varinjlim_{\gamma \in \Gamma}{S}_{\gamma} $ is pure (resp. flat, satisfying lying over property).
\item[(ii)]
Let $\{R_\gamma:\gamma\in \Gamma\}$  be a pure (flat, satisfying lying over property) direct system of rings. Then
 $R_\gamma \to {\varinjlim}_{\gamma\in \Gamma} R_\gamma$ is pure (flat, satisfying lying over property) for all $\gamma\in \Gamma$.
 \end{enumerate}
\end{lemma}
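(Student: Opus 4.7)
The plan is to establish (i) separately for each of the three properties, then deduce (ii) from (i) by a cofinality argument. Set $R := \varinjlim_{\gamma} R_\gamma$ and $S := \varinjlim_{\gamma} S_\gamma$, so $\varphi = \varinjlim_{\gamma} \varphi_\gamma : R \to S$. A key preliminary identification I would use throughout is that $S$ is isomorphic as an $R$-module to $\varinjlim_{\gamma} (S_\gamma \otimes_{R_\gamma} R)$: since $R = \varinjlim_{\delta} R_\delta$, the double colimit $\varinjlim_{\gamma} \varinjlim_{\delta \geq \gamma}(S_\gamma \otimes_{R_\gamma} R_\delta)$ may be computed on the diagonal and thus equals $\varinjlim_{\gamma} S_\gamma = S$. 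For flatness, each $S_\gamma \otimes_{R_\gamma} R$ is $R$-flat as the base change of a flat map, so $S$ is a filtered colimit of flat $R$-modules and hence itself $R$-flat.

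For purity, take an arbitrary $R$-module $M$ and try to prove $M \to M \otimes_R S$ is injective. Combining the preliminary identification with the commutation of tensor with filtered colimits yields $M \otimes_R S \cong \varinjlim_{\gamma} (M \otimes_{R_\gamma} S_\gamma)$, where $M$ is regarded as an $R_\gamma$-module via $R_\gamma \to R$. If $m \in M$ maps to zero in $M \otimes_R S$, the filtered-colimit property gives some $\gamma$ with $m \otimes 1 = 0$ in $M \otimes_{R_\gamma} S_\gamma$; but purity of $\varphi_\gamma$ makes $M \to M \otimes_{R_\gamma} S_\gamma$ injective, forcing $m = 0$.

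For the lying over property, I would use the criterion that $\varphi$ satisfies lying over at $\fp \in \Spec R$ if and only if $\kappa(\fp) \otimes_R S \neq 0$. Writing $\fp_\gamma$ for the preimage of $\fp$ in $R_\gamma$, the directed union $R/\fp = \varinjlim_{\gamma} R_\gamma/\fp_\gamma$ of integral domains passes to fraction fields as $\kappa(\fp) = \varinjlim_{\gamma} \kappa(\fp_\gamma)$; commuting tensor with colimit then yields $\kappa(\fp) \otimes_R S \cong \varinjlim_{\gamma} \kappa(\fp_\gamma) \otimes_{R_\gamma} S_\gamma$. Each term is nonzero by lying over for $\varphi_\gamma$, and a filtered colimit of nonzero rings is nonzero because all transition maps preserve $1$.

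Part (ii) follows by applying (i) to the morphism from the constant system $\{R_{\gamma_0}\}_{\delta \geq \gamma_0}$ to the cofinal subsystem $\{R_\delta\}_{\delta \geq \gamma_0}$, with components the structure maps $R_{\gamma_0} \to R_\delta$---pure (resp. flat, satisfying lying over) by hypothesis; the colimit of this morphism is precisely $R_{\gamma_0} \to \varinjlim_{\delta} R_\delta$. The main technical delicacy is the bookkeeping in the tensor-colimit identifications, in particular the fact that each $S_\gamma$ is naturally an $R_\gamma$-module rather than an $R$-module; once these identifications are in place, the verifications reduce directly to the single-stage hypotheses on each $\varphi_\gamma$.
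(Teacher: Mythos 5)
Your proof is correct and follows essentially the same route as the paper: the key step in both is the identification $M\otimes_R S\cong\varinjlim_{\gamma}(M\otimes_{R_\gamma}S_\gamma)$ (the paper cites Cartan--Eilenberg for it), after which purity reduces to the stage-wise hypothesis, and (ii) is deduced from (i) via the constant-system/cofinality observation just as you do. The only difference is that you spell out the flat case (via $S\cong\varinjlim(S_\gamma\otimes_{R_\gamma}R)$ being a filtered colimit of flat $R$-modules) and the lying-over case (via nonvanishing of the fibre rings $\kappa(\fp_\gamma)\otimes_{R_\gamma}S_\gamma$), where the paper merely remarks that the flat case is ``similar'' and cites Dobbs--Fontana for lying over.
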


\begin{proof}
$\mathrm{(i)}$:
First, we prove  the purity.
Set $R := \varinjlim_{\gamma \in \Gamma}{R}_{\gamma}$ and $S := \varinjlim_{\gamma \in \Gamma}{S}_{\gamma} $.
Let $M$ be an $R$-module. Then $\pi_ \gamma: = Id_M \otimes \phi_\gamma : M \otimes _{R_\gamma} R_\gamma \to M \otimes _{R_\gamma} S_\gamma$ is an embedding. Also, $\{M \otimes _{R_\gamma} R_\gamma | \gamma \in \Gamma\}$ and $\{M \otimes _{R_\gamma} S_\gamma | \gamma \in \Gamma\}$ are direct systems of Abelian groups and $\pi := \{\pi_\gamma | \gamma \in \Gamma\}$ is a morphism of these systems. This means that $\pi: \varinjlim_{\gamma \in \Gamma} M \otimes _{R_\gamma} R_\gamma \to \varinjlim_{\gamma \in \Gamma} M \otimes _{R_\gamma} S_\gamma$ is an injection. By \cite[Chapter vi, Exercise 17]{CE}, $ M \otimes _{R} S \cong \varinjlim_{\gamma \in \Gamma} M \otimes _{R_\gamma} S_\gamma $. Hence  $M \otimes _{R} R \to  M \otimes _{R} S$ is one to one. Consequently, $R \to S$ is pure.

The proof in the  flat case is similar. For lying over see \cite{DF}.

$\mathrm{(ii)}$: Follows by part $(i)$.
\end{proof}

\begin{lemma}\label{to}
Let $\{(A_\gamma, m_\gamma):\gamma\in \Gamma\}$ be a pure direct system of Noetherian  local rings   with   constituent local homomorphism of rings $A_\gamma \to A_\delta$ for all $(\gamma , \delta) \in \Gamma \times \Gamma$ with $\gamma \leq \delta$. If the ring  $A:= {\varinjlim}_{\gamma\in \Gamma}A_\gamma$ is Noetherian, then  there exists $\lambda \in \Gamma$ such that
 \begin{enumerate}
\item[$\mathrm{(i)}$] $\dim(A_\gamma) = \dim(A)$ for all $\lambda \leq \gamma$.
\item[$\mathrm{(ii)}$]$\fm_\gamma A_\delta = \fm_\delta$ for all $\lambda \leq \gamma \leq \delta$.
\end{enumerate}
\end{lemma}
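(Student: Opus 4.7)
The plan is first to identify the local structure of $A$, then to exploit purity of $\phi_\gamma: A_\gamma \to A$ (provided by Lemma~\ref{flat}(ii)) in the form $IA \cap A_\gamma = I$ for every ideal $I$ of $A_\gamma$, in order to pull back information about the maximal ideal of $A$ to each $A_\gamma$. I would first check that $\fm := \bigcup_{\gamma \in \Gamma} \phi_\gamma(\fm_\gamma)$ is a proper ideal of $A$: filteredness of $\Gamma$ together with locality of the transition maps makes it closed under the ring operations of $A$, and $1 \in \fm$ would force $f_{\gamma\delta}(m) = 1 \in \fm_\delta$ for some $\delta$. Since any element of $A \setminus \fm$ comes from a unit in some $A_\gamma$ and is therefore a unit of $A$, the ring $A$ is local with maximal ideal $\fm$, the map $\phi_\gamma$ is local, and $\phi_\gamma^{-1}(\fm) = \fm_\gamma$.

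Because $A$ is Noetherian, I will fix finitely many generators $a_1, \ldots, a_n$ of $\fm$ and choose $\lambda \in \Gamma$ large enough that $a_i = \phi_\lambda(b_i)$ for some $b_i \in A_\lambda$. For $\gamma \geq \lambda$, setting $b_i^{(\gamma)} := f_{\lambda\gamma}(b_i) \in \fm_\gamma$, purity of $\phi_\gamma$ will give
\[
(b_1^{(\gamma)}, \ldots, b_n^{(\gamma)}) A_\gamma \;=\; (b_1^{(\gamma)}, \ldots, b_n^{(\gamma)}) A \cap A_\gamma \;=\; \fm \cap A_\gamma \;=\; \fm_\gamma,
\]
which in particular forces $\fm_\gamma A = \fm$. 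Applying the same identity at level $\delta$ then yields $\fm_\gamma A_\delta = (b_1^{(\delta)}, \ldots, b_n^{(\delta)}) A_\delta = \fm_\delta$ for all $\delta \geq \gamma \geq \lambda$, which is (ii).

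For (i), I plan to use the identity $\fm_\gamma A = \fm$ to compare systems of parameters in both directions. Enlarging $\lambda$, I will additionally fix a system of parameters $a_1', \ldots, a_d'$ of $A$ (with $d = \dim A$) coming from $c_i \in A_\lambda$, and set $c_i^{(\gamma)} := f_{\lambda\gamma}(c_i)$. From $(c_i^{(\gamma)}) A = (a_i') A \supseteq \fm^N$ and $\fm_\gamma^N \subseteq \fm^N$, purity yields $\fm_\gamma^N \subseteq (c_i^{(\gamma)}) A \cap A_\gamma = (c_i^{(\gamma)}) A_\gamma$, so $(c_i^{(\gamma)})$ is a system of parameters of $A_\gamma$ and $\dim A_\gamma \leq d$ by Krull. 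Conversely, any system of parameters $y_1, \ldots, y_r$ of $A_\gamma$ satisfies $(y_i) A_\gamma \supseteq \fm_\gamma^M$, which extends to $(y_i) A \supseteq \fm_\gamma^M A = \fm^M$, so $(y_i)$ is a system of parameters of $A$ and $\dim A \leq r$.

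The main subtlety is that purity is strictly weaker than (faithful) flatness, so the standard Noetherian dimension formula $\dim A = \dim A_\gamma + \dim(A/\fm_\gamma A)$ is not immediately at hand. What saves the argument is that (ii) trivializes the fibre, and combined with the ideal-contraction property $IA \cap A_\gamma = I$ of pure maps, a direct system-of-parameters comparison as above does the job.
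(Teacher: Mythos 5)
Your proof is correct, and for part (i) it takes a genuinely different route from the paper. The paper first notes, as you do, that $A$ is local with $\fm=\fm_\gamma A$ once generators of $\fm$ are lifted to some $A_\lambda$; it then gets $\dim A\leq\dim A_\gamma$ from the dimension inequality $\dim A\leq \dim A_\gamma+\dim A/\fm_\gamma A$ for local homomorphisms of Noetherian local rings (Matsumura, Theorem 15.1 -- note this inequality needs no flatness, so your aside that the dimension formula is ``not at hand'' is only an issue for the equality, which neither argument needs), and gets the reverse inequality $\dim A_\gamma\leq\dim A$ by citing Brenner's result that pure extensions lift chains of prime ideals. You instead prove both inequalities by hand: pulling a system of parameters of $A$ back into $A_\gamma$ and using the purity contraction $(\,\cdot\,)A\cap A_\gamma=(\,\cdot\,)A_\gamma$ together with Krull's height theorem for $\dim A_\gamma\leq\dim A$, and pushing a system of parameters of $A_\gamma$ forward (using $\fm_\gamma A=\fm$) for $\dim A\leq\dim A_\gamma$. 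This makes your argument self-contained at the cost of a bit more generator chasing, whereas the paper's is shorter modulo the two citations. For part (ii) both arguments rest on the same contraction property $IA\cap A_\gamma=I$ of pure maps; the paper does it in one line, $\fm_\delta=(\fm_\delta A)\cap A_\delta=(\fm_\gamma A)\cap A_\delta=(\fm_\gamma A_\delta)A\cap A_\delta=\fm_\gamma A_\delta$, while you reach the same identity through the lifted generators $b_i^{(\gamma)}$, which is essentially equivalent.
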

\begin{proof}
$\mathrm{(i)}$: It is easy to see  $A$ is  local with the  maximal ideal $\fm := {\varinjlim}_{\gamma\in \Gamma}{\fm_\gamma}$ and $\fm \cap A_\gamma = \fm_\gamma$ for all $\gamma \in \Gamma$.
  Suppose $\fm$ is generated by $y_1,\ldots,y_t$. Then there exists $\lambda \in \Gamma$ such that $y_i \in A_\gamma$ for all $i$ and $\lambda \leq \gamma$. Also, $\fm = \fm_\gamma A$.  In view of \cite[Theorem 15.1]{Mat}, $$\dim A \leq \dim A_\gamma + \dim A/{\fm_\gamma A}.$$
  Therefore $\dim A \leq \dim A_\gamma$. Recall from Lemma \ref{flat}(ii) that $A_\gamma \to A$ is pure. At this point, we appeal  \cite[Remark 4 and Corollary 5]{B} to see $\dim A_\gamma \leq \dim A$. This completes the proof.

$\mathrm{(ii)}$: In the light of purity,$$\fm_\delta =(\fm_\delta A)\cap A_\delta =(\fm_\gamma A)\cap A_\delta = (\fm_\gamma A_\delta)A\cap A_\delta = \fm_\gamma A_\delta,$$
    for all $\lambda \leq \gamma \leq \delta$. Note that $IB\cap A=I$ for any ideal $I$ of a ring $A$ and any pure extension $B$ of $A$.
\end{proof}

\begin{corollary}\label{cor2}
Let $\{(A_\gamma, \fm_\gamma):\gamma\in \Gamma\}$ be a pure  direct system of Noetherian Gorenstein local rings   with constituent local homomorphisms of rings $A_\gamma \to A_\delta$ for all $(\gamma , \delta) \in \Gamma \times \Gamma$ with $\gamma \leq \delta$. If the ring $A:= {\varinjlim}_{\gamma\in \Gamma}A_\gamma$ is  Noetherian, then $A$ is Gorenstein.
\end{corollary}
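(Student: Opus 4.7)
The plan is to reduce the Gorenstein question to showing that a suitable Artinian quotient of $A$ has one-dimensional socle. First, after discarding small indices (using Lemma \ref{to}) we may assume $\dim A_\gamma = \dim A =: d$ and $\fm_\gamma A_\delta = \fm_\delta$ for all $\gamma \leq \delta$ in $\Gamma$. Since Gorenstein rings are Cohen-Macaulay, Corollary \ref{drnoeth} applies and produces that $A$ is a (Noetherian) Cohen-Macaulay local ring with maximal ideal $\fm = \fm_\lambda A$ and residue field $k = \varinjlim_{\gamma} k_\gamma$, where $k_\gamma := A_\gamma/\fm_\gamma$; note that $k_\gamma \to k_\delta$ is an injective field extension since $A_\gamma \to A_\delta$ is a local map of local rings.

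Next I would choose a system of parameters $x_1, \ldots, x_d$ of $A$ lying in some $A_\lambda$ (enlarge $\lambda$ if needed). The eventual equality $\fm = \fm_\gamma A$ together with purity (giving $I A \cap A_\gamma = I$ for every ideal $I$ of $A_\gamma$, cf.\ Lemma \ref{flat}(ii)) shows that $\fm_\gamma^N \subseteq (x_1,\ldots,x_d)A \cap A_\gamma = (x_1,\ldots,x_d)A_\gamma$ for some $N$; thus $x_1,\ldots,x_d$ is a system of parameters in each $A_\gamma$ ($\gamma \geq \lambda$), and because $A_\gamma$ is Cohen-Macaulay it is in fact a regular sequence there. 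Setting $B_\gamma := A_\gamma/(x_1,\ldots,x_d)$ and $B := A/(x_1,\ldots,x_d) = \varinjlim B_\gamma$, each $B_\gamma$ is Gorenstein Artinian local with $\Soc(B_\gamma)$ one-dimensional over $k_\gamma$, while $B$ is Noetherian Artinian local with residue field $k$. Base-changing the pure direct system $\{A_\gamma\}$ along $A_\gamma \twoheadrightarrow B_\gamma$ (again via Lemma \ref{flat}) keeps $B_\gamma \to B_\delta$ and $B_\gamma \to B$ pure, hence injective, and $B$ is Gorenstein iff $A$ is.

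The heart of the argument is then to identify $\Soc(B)$ with $\varinjlim \Soc(B_\gamma)$. The inclusion $\varinjlim \Soc(B_\gamma) \hookrightarrow \Soc(B)$ comes from: the image in $B_\delta$ of a socle element $s \in \Soc(B_\gamma)$ is annihilated by $\fm_\gamma B_\delta = \fm_\delta B_\delta = \fm_{B_\delta}$. Conversely, if $b \in \Soc(B)$ is represented by $b_\gamma \in B_\gamma$, then the finitely many generators of $\fm_{B_\lambda}$ annihilate $b$, so already annihilate some lift $b_\delta$ in $B_\delta$ ($\delta \geq \gamma$); but $\fm_{B_\lambda} B_\delta = \fm_{B_\delta}$, so $b_\delta \in \Soc(B_\delta)$. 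Finally, each $\Soc(B_\gamma) = k_\gamma \cdot s_\gamma$, and purity of $B_\gamma \hookrightarrow B_\delta$ forces $\phi_{\gamma\delta}(s_\gamma) \neq 0$ in the one-dimensional $k_\delta$-space $\Soc(B_\delta)$; thus the transition maps match the inclusion $k_\gamma \hookrightarrow k_\delta$ up to a unit, and $\Soc(B) \cong \varinjlim k_\gamma = k$ is one-dimensional. Hence $B$, and therefore $A$, is Gorenstein. I expect the main technical point to be this last socle identification—specifically, verifying that the filtered colimit of the one-dimensional socles does not collapse—which is exactly where purity (injectivity of the transition maps and $\fm_\gamma A_\delta = \fm_\delta$) is essential.
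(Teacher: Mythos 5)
Your argument is correct, but it reaches Gorensteinness by a different mechanism than the paper. Both proofs start the same way: Lemma \ref{to} normalizes dimensions and gives $\fm_\gamma A_\delta=\fm_\delta$, Corollary \ref{drnoeth} gives that $A$ is Noetherian Cohen--Macaulay local, and purity enters through the contraction property $IA\cap A_\gamma=I$. From there the paper stays at the level of ideals: given a parameter ideal $\underline{x}A=I\cap J$ of $A$, it contracts everything to a single $A_\delta$ containing $\underline{x}$ and generators of $I$ and $J$, checks via $\rad(\underline{x}A)\cap A_\delta=\fm_\delta$ and $\dim A_\delta=\dim A$ that $\underline{x}A_\delta$ is a parameter ideal of the Gorenstein ring $A_\delta$, and then imports its irreducibility back to $A$ by expanding the contracted ideals; thus every parameter ideal of $A$ is irreducible and $A$ is Gorenstein. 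You instead pass to the Artinian reductions $B_\gamma=A_\gamma/(\underline{x})A_\gamma$ and $B=A/(\underline{x})A$ and compute $\Soc(B)=\varinjlim\Soc(B_\gamma)$, using $\fm_\gamma A_\delta=\fm_\delta$ for both inclusions and injectivity of $B_\gamma\to B_\delta$ (i.e. $(\underline{x})A_\delta\cap A_\gamma=(\underline{x})A_\gamma$, a consequence of purity) to see that the one-dimensional socles do not collapse, so $\Soc(B)$ is one-dimensional over $k=\varinjlim k_\gamma$. Two minor remarks: Lemma \ref{flat} does not literally assert base-change stability of purity, but you do not need that—injectivity of $B_\gamma\to B_\delta$ and $B_\gamma\to B$ already follows from the contraction property; and your proof only treats one parameter ideal, which suffices because a Cohen--Macaulay Noetherian local ring is Gorenstein as soon as one parameter ideal is irreducible (equivalently, one Artinian reduction has one-dimensional socle). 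The paper's route is a bit shorter and avoids the colimit-of-socles bookkeeping; yours makes more explicit exactly where purity prevents degeneration in the limit.
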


\begin{proof}
Recall from  Lemma \ref{to} that $A$ is local with the maximal ideal $\fm := {\varinjlim}_{\gamma\in \Gamma}{\fm_\gamma}$ and $\fm \cap A_\gamma = \fm_\gamma$ for all $\gamma \in \Gamma$. Also, there exists $\lambda \in \Gamma$ such that
 $\dim A_\gamma = \dim A_\delta = \dim A $ and $\fm_\gamma A_\delta = \fm_\delta$  for all $\lambda \leq \gamma \leq \delta$. It deduces from Corollary  \ref{drnoeth} that $A$ is  Cohen-Macaulay. Now we show that every parameter  ideal of $A$ is irreducible and consequently $A$ is Gorenstein.

   Let $\underline{x}$ be a  system of parameters of  $A$ such that $\underline{x}A = I \cap J$ where $I$ and $J$ are ideals of $A$. We choose $\delta \geq \lambda$, such that $A_\delta$ contains $\underline{x}$ and generators of $I$ and $J$. By Lemma \ref{flat}(ii) the ring homomorphism $A_\delta \to A$ is pure. In view of purity $(\underline{x}) A_\delta = (\underline{x}) A \cap A_\delta$.  Hence $ (\underline{x}) A_\delta = (I \cap A_\delta) \cap (J \cap A_\delta) $. Since generators of $I$ (resp. $J$)  belong to $I \cap A_\delta$ (resp. $J \cap A_\delta$), we have $(I \cap A_\delta)A = I$ (resp.$(J \cap A_\delta)A = J$). Thus it is enough to show that $(\underline{x}) A_\delta$ is irreducible. In order to prove it, we claim that $(\underline{x}) A_\delta$ is a parameter ideal in $A_\delta$. Indeed,
clearly,
$$\rad(\underline{x}A_{\delta})= \rad (\underline{x}A \cap A_\delta) = \rad (\underline{x}A) \cap A_\delta = \fm \cap A_\delta = \fm_\delta.$$
Also  $ \dim(A_\delta) = \dim(A)$.
These  say that
$\underline{x}$ is a  system of parameters of $A_\delta$.
\end{proof}

\begin{remark}\label{rem}
 Let $\{A_\gamma:\gamma\in \Gamma\}$ be a  direct system of Noetherian regular  (resp. complete intersection) rings. If
$A:={\varinjlim}_{\gamma\in \Gamma} A_\gamma$ is Noetherian, then
$A$ is regular  (resp. complete intersection). The regular case is in  \cite[Lemma 1.4]{S}.
For the complete intersection  case, we show $A_\fp$ is complete intersection for all $\fp\in \Spec(A)$.
 Set $\fp_\gamma:=\fp \cap A_\gamma$.  By using  $\{A_\gamma:\gamma\in \Gamma\}$, we can construct a direct system of local rings  $\{(A_\gamma)_{\fp_\gamma}:\gamma\in \Gamma\}$. It is easy to show that $A_{\fp}\cong{\varinjlim}_{\gamma\in \Gamma} (A_\gamma)_{\fp_\gamma}$.
 Then, it is enough to consider the case $\{(A_\gamma, \fm_\gamma):\gamma\in \Gamma\}$ is a direct system of Noetherian complete intersection local rings, with constituent local homomorphism of rings $A_\gamma \to A_\delta$ for all $(\gamma , \delta) \in \Gamma \times \Gamma$ with $\gamma \leq \delta$. Then $\fm :={\varinjlim}_{\gamma\in \Gamma}m_\gamma$ is the unique maximal ideal of $A ={\varinjlim}_{\gamma\in \Gamma}A_\gamma$. Thus $ \fm_\gamma = \fm \cap A_\gamma$ for all $\gamma \in \Gamma$.
Hence, $A/ \fm:={\varinjlim}_{\gamma\in \Gamma}A_\gamma /\fm_\gamma$.
 In the light of \cite[1.4.8]{MR}, $$H_n({\varinjlim}_{\gamma\in \Gamma}A_\gamma,{\varinjlim}A_\gamma/\fm_\gamma, {\varinjlim}_{\gamma\in \Gamma}A_\gamma /\fm_\gamma)\cong
{\varinjlim}_{\gamma\in \Gamma}H_n(A_\gamma,A_\gamma /\fm_\gamma, A_\gamma /\fm_\gamma).$$
It remains to apply \cite[4.6 Corollary]{Av}.
\end{remark}

\begin{corollary}\label{cor1}
Let $\{(A_\gamma, m_\gamma):\gamma\in \Gamma\}$ be a pure direct system of Noetherian regular local rings   with   constituent local homomorphism of rings $A_\gamma \to A_\delta$ for all $(\gamma , \delta) \in \Gamma \times \Gamma$ with $\gamma \leq \delta$. If $A:= {\varinjlim}_{\gamma\in \Gamma}A_\gamma$ is Noetherian,   then there exists $\lambda \in \Gamma$ such that the direct system $\{A_\gamma: \lambda \leq \gamma ,\gamma \in \Gamma \}$ is flat.
\end{corollary}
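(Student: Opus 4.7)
The plan is to derive flatness from the two conditions supplied by Lemma~\ref{to} together with Hironaka's ``miracle flatness'' criterion: a local homomorphism $(R,\fm_R)\to(S,\fm_S)$ of Noetherian local rings with $R$ regular and $S$ Cohen--Macaulay is flat provided that $\dim S=\dim R+\dim(S/\fm_R S)$.

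First I would invoke Lemma~\ref{to}, whose hypotheses are exactly matched by our setup (a pure direct system of Noetherian local rings with local transition maps, and Noetherian direct limit). It supplies a $\lambda\in\Gamma$ with two crucial properties for all $\lambda\leq\gamma\leq\delta$: namely $\dim A_\gamma=\dim A_\delta=\dim A$, and $\fm_\gamma A_\delta=\fm_\delta$. The second identity implies that the closed fiber of the map $A_\gamma\to A_\delta$ is
\[
A_\delta/\fm_\gamma A_\delta=A_\delta/\fm_\delta,
\]
a field, and in particular $\dim(A_\delta/\fm_\gamma A_\delta)=0$. Combined with the equality of dimensions, the dimension formula
\[
\dim A_\delta=\dim A_\gamma+\dim(A_\delta/\fm_\gamma A_\delta)
\]
holds automatically.

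Next I would observe that $A_\gamma$ and $A_\delta$ are both Noetherian regular local rings (by hypothesis), so $A_\gamma$ is regular and $A_\delta$ is in particular Cohen--Macaulay. Thus the miracle flatness theorem (see, e.g., Matsumura, Theorem~23.1) applies to the local homomorphism $A_\gamma\to A_\delta$ and yields that this map is flat. Restricting the direct system to the cofinal subset $\{\gamma\in\Gamma:\lambda\leq\gamma\}$ therefore gives a flat direct system, which is exactly the conclusion.

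The only real point that requires care is the justification that all hypotheses of miracle flatness are in force simultaneously; everything needed (regularity of the source, Cohen--Macaulayness of the target, equality of dimensions, and the fiber dimension computation) has already been arranged, so no further technical work is needed beyond citing Lemma~\ref{to} and the flatness criterion. I do not expect any serious obstacle here, since the real content has been bundled into Lemma~\ref{to}; this corollary is essentially a dressed-up application of miracle flatness to the situation $\fm_\gamma A_\delta=\fm_\delta$ with equal dimensions.
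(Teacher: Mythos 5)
Your proposal is correct and follows the paper's own argument essentially verbatim: the paper also invokes Lemma \ref{to} to get $\dim A_\gamma=\dim A_\delta$ and $\fm_\gamma A_\delta=\fm_\delta$ for $\lambda\leq\gamma\leq\delta$, notes the dimension formula $\dim A_\delta=\dim A_\gamma+\dim(A_\delta/\fm_\gamma A_\delta)$, and concludes flatness via Matsumura's Theorem 23.1 (miracle flatness). No discrepancies to report.
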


\begin{proof} By Lemma \ref{to}, there exists $\lambda \in \Gamma$ such that  $\dim A_\gamma = \dim A_\delta$ and $\fm_\gamma A_\delta = \fm_\delta$ for all $(\gamma , \delta) \in \Gamma \times \Gamma$ with $\lambda \leq \gamma \leq \delta$. Then     $$\dim A_\delta = \dim A_{\gamma} +\dim A_\delta/ \fm_{\gamma} A_\delta.$$
 By applying \cite[Theorem 23.1]{Mat}, we observe  that $A_{\gamma}\to A_\delta $ is  flat.
\end{proof}

\section{Cohen-Macaulayness  in the sense of Hamilton-Marley}

In this Section we study  the behavior of Cohen-Macaulayness  in the sense of Hamilton-Marley under taking direct limits.
Let us recall some notions.
Let $R$ be a ring, $M$ an $R$-module and  $\underline{x} = x_{1}, \ldots,
x_\ell$ be a system of elements of $R$. By $\mathbb{K}_{\bullet}(\underline{x})$  we mean the Koszul complex of $R$ with respect to $\underline{x}$.  For $m \geq n$ there exists a
chain map $$\varphi^{m} _{n} (\underline{x})
:\mathbb{K}_{\bullet}(\underline{x}^{m})\lo
\mathbb{K}_{\bullet}(\underline{x}^{n}),$$ which is induced by
multiplication of $(\prod x_{i})^{m-n}$. Recall from \cite[Definition 2.3]{Sch} that $\underline{x}$ is
\textit{weakly proregular}, if for each $n>0$ there exists an $m \geq n$ such
that the maps $$H_{i}(\varphi^{m} _{n} (\underline{x})) :H_{i}
(\mathbb{K}_{\bullet}(\underline{x}^{m}))\lo H_{i}
(\mathbb{K}_{\bullet}(\underline{x}^{n}))$$ are zero for all $i \geq
1$.

\begin{definition}\label{defhm}
Adopt the above notation.  \begin{enumerate}
\item[$\mathrm{(i)}$] Following
\cite[Definition 3.1]{HM},
$\underline{x}$ is called a \textit{parameter sequence} on
$R$, if
(1) $\underline{x}$ is a weak proregular sequence; (2)
$(\underline{x})R \neq R$, and (3) $H^{\ell}_{\underline{x}}
(R)_{\fp} \neq 0$ for all $\fp \in
\V(\underline{x}R)$. Also, $\underline{x}$ is called a strong
parameter sequence on $R$ if $x_{1},\ldots, x_{i}$ is a parameter
sequence on $R$ for all $1\leq i \leq \ell$.
\item[$\mathrm{(ii)}$]In view of \cite[Definition 4.1]{HM}, $R$ is called
Cohen-Macaulay in the sense of Hamilton-Marley, if each strong
parameter sequence on $R$ is a regular sequence on $R$.
\end{enumerate}
\end{definition}

\begin{proposition}\label{purity}
Let $\{A_\gamma:\gamma\in \Gamma\}$ be a direct system of  Noetherian Cohen-Macaulay rings with maps satisfying lying over (e.g. pure or integral extension). Then $A= {\varinjlim}_{\gamma\in \Gamma} A_\gamma$ is Cohen-Macaulay  in the sense of Hamilton-Marley.

\end{proposition}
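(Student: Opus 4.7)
The plan is to show that every strong parameter sequence $\underline{x}=x_1,\ldots,x_\ell$ on $A$ must be a regular sequence on $A$, by pushing a polynomial-grade estimate from the Noetherian Cohen-Macaulay stages $A_\gamma$ up to $A$ via Lemma~\ref{kgrad}. Since $\underline{x}$ is a finite sequence, I would pick $\gamma_0\in\Gamma$ with $\underline{x}\subseteq A_{\gamma_0}$ and pass to the cofinal subsystem $\{\gamma\geq\gamma_0\}$, which preserves the limit; by Lemma~\ref{flat}(ii) each map $A_\gamma\to A$ still satisfies lying over.

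Viewing $\{A_\gamma\}_{\gamma\geq\gamma_0}$ as a direct system of $A_{\gamma_0}$-modules with limit $A$, Lemma~\ref{kgrad} combined with the base-change identity of Lemma~\ref{pro}(iii) would give
\[
\pgrade_A((\underline{x})A,A)\;\geq\;\inf_{\gamma\geq\gamma_0}\pgrade_{A_\gamma}((\underline{x})A_\gamma,A_\gamma)\;=\;\inf_{\gamma\geq\gamma_0}\Ht_{A_\gamma}((\underline{x})A_\gamma),
\]
the last equality using that each $A_\gamma$ is Noetherian Cohen-Macaulay. The central claim is then that $\Ht_{A_\gamma}((\underline{x})A_\gamma)\geq\ell$ for every $\gamma$. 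For any prime $\fq$ of $A_\gamma$ minimal over $(\underline{x})A_\gamma$, lying over for $A_\gamma\to A$ supplies $\fQ\in\Spec(A)$ with $\fQ\cap A_\gamma=\fq$ and $\fQ\supseteq(\underline{x})A$. The strong parameter hypothesis on $A$, through the local nonvanishing of the top $\check{C}$ech cohomology $H^\ell_{\underline{x}}(A)_{\fQ}$, forces $\Ht_A(\fQ)\geq\ell$; the lying over structure then descends this bound to $\Ht_{A_\gamma}(\fq)\geq\ell$---for the two example classes this is a routine chain-of-primes argument (Cohen--Seidenberg incomparability for integral extensions, and the identity $\fa=\fa A\cap A_\gamma$ applied along a chain of primes witnessing the height for pure extensions).

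Combining the two displayed inequalities yields $\pgrade_A((\underline{x})A,A)\geq\ell$; the reverse bound is automatic since $\underline{x}$ has $\ell$ generators and the $\check{C}$ech-grade interpretation of $\pgrade$ gives $\pgrade\leq\ell$. Because $\underline{x}$ is weakly proregular as part of being a strong parameter sequence, the coincidence of Koszul grade with $\check{C}$ech/polynomial grade for such sequences (as used in the proof of Lemma~\ref{kgrad}) forces Koszul grade equal to $\ell$, which is precisely the statement that $\underline{x}$ is a regular sequence on $A$. This gives Cohen-Macaulayness of $A$ in the sense of Hamilton-Marley.

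The main obstacle is the height descent $\Ht_A(\fQ)\geq\ell\Longrightarrow\Ht_{A_\gamma}(\fq)\geq\ell$ along the lying-over map. For the two spelled-out examples this is standard, but extracting it uniformly from the bare lying over hypothesis requires careful bookkeeping of prime chains. A secondary subtlety is pinning down $\Ht_A(\fQ)\geq\ell$ directly from the local top $\check{C}$ech nonvanishing; the cleanest route is probably a Noetherian approximation at a sufficiently large $A_\delta$ where height, $\check{C}$ech grade, and Koszul grade can all be identified and the Cohen-Macaulay property of $A_\delta$ brought to bear.
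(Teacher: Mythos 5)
Your reduction to a grade inequality breaks down at the very last step. From $\pgrade_A((\underline{x})A,A)=\ell$ (equivalently, acyclicity of the Koszul/\v{C}ech complex on $\underline{x}$) you cannot conclude that $\underline{x}$ itself is a regular sequence: grade of the ideal $(\underline{x})$ is insensitive to the order of the $x_i$, while regular sequences are not outside the Noetherian local setting (for $y(1-x),\,z(1-x),\,x$ in $k[x,y,z]$ the ideal has grade $3$ but the sequence in that order is not regular). So ``Koszul grade equal to $\ell$'' is \emph{not} ``precisely the statement that $\underline{x}$ is a regular sequence.'' A repair would have to exploit the \emph{strong} parameter hypothesis to get the grade bound for every truncation $(x_1,\ldots,x_i)$ and then run an induction with \v{C}ech long exact sequences to show $H^0_{(x_1,\ldots,x_i)}\bigl(A/(x_1,\ldots,x_{i-1})A\bigr)=0$, i.e.\ that $x_i$ is regular modulo its predecessors; none of this is in your write-up, and the full-length grade estimate alone does not suffice.

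The other gap is the height descent $\Ht_A(\fQ)\geq\ell\Rightarrow\Ht_{A_\gamma}(\fq)\geq\ell$, which you correctly flag as the main obstacle but then dispose of too quickly. Lying over alone gives no incomparability, and neither does purity: $k\to k[x]$ is split, hence pure, yet $(0)\subsetneq(x)$ both contract to $(0)$ and heights drop, so the ``routine chain-of-primes argument via $\fa=\fa A\cap A_\gamma$'' does not work even in the pure case, let alone under the bare lying-over hypothesis of the statement. (Also, $H^\ell_{\underline{x}}(A)_{\fQ}\neq0\Rightarrow\Ht_A(\fQ)\geq\ell$ in the non-Noetherian ring $A$ itself needs a citation such as \cite[Proposition 3.6]{HM}.) The paper's proof avoids heights and grades entirely: for $\fp\in\Var(\underline{x}A_\delta)$ it uses lying over to pick $\fq\in\Spec(A)$ over $\fp$ and then the base-change identity
$H^{\ell}_{\underline{x}}(A)_{\fq}\cong H^{\ell}_{\underline{x}(A_\delta)_\fp}\bigl((A_\delta)_\fp\bigr)\otimes_{(A_\delta)_\fp}A_{\fq}$
(top \v{C}ech cohomology is a cokernel, hence commutes with any base change), so nonvanishing over $A$ forces nonvanishing over $(A_\delta)_\fp$; together with automatic weak proregularity in Noetherian rings this shows $\underline{x}$ (and each truncation) is a strong parameter sequence on $A_\delta$, whence regular on $A_\delta$ because Noetherian Cohen--Macaulay rings are Cohen--Macaulay in the Hamilton--Marley sense, and regularity passes to the direct limit. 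You would do better to transfer the parameter-sequence property itself down to the stages in this way rather than trying to push height and grade data through the lying-over maps.
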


\begin{proof}
Let $\textbf{x}:=x_1,\ldots,x_n $ be a  parameter sequence on $A$. There exists $\gamma \in \Gamma$ such that $\textbf{x}\in A_\delta $ for all $\delta \geq \gamma$.
Let  $(\gamma , \delta) \in \Gamma \times \Gamma$ with $\lambda \leq \gamma \leq \delta$ and $\fp \in \Var(\textbf{x}A_\delta  )$. In view of  Lemma \ref{flat}(ii), we know that the lying over property  holds for $A_\delta \to A$. There is a prime ideal $\fq \in \Spec (A)$ such that $\fq \cap A_\delta   = \fp $. Then,
 \[\begin{array}{ll}
H_{\textbf{x}}^{n}(A)_{\fq}&\cong H_{\textbf{x}A_\fq}^{n}(A_\fq)\\
&\cong H_{\textbf{x}(A_\delta  )_\fp}^{n}(A_\fq)\\
&\cong H_{\textbf{x}(A_\delta)_\fp}^{n}((A_\delta)_\fp)\otimes_{(A_\delta )_\fp} A_{\fq}. \\
\end{array}\]
It yields that $H_{\textbf{x}(A_\delta  )_\fp}^{n}((A_\delta  )_\fp) \neq 0$. Recall that any finite sequence of elements in a   Noetherian ring is weak proregular, i.e.,  $\textbf{x}$ is a  parameter sequence over $A_\delta $.

Let $\textbf{y}:=y_1,\ldots,y_m $ be a strong  parameter sequence on $A$. Now, one can easily find $\gamma \in \Gamma$ such that $ \textbf{y}$ is a strong parameter sequence on $A_\delta$ for all $\delta \geq \gamma$. Recall that a   Noetherian  Cohen-Macaulay ring is Cohen-Macaulay  in the sense of Hamilton-Marley.
Thus, by Cohen-Macaulayness of $A_\delta  $,  $\textbf{y}$ is a regular sequence on $A_\delta  $  for all $\delta \geq \gamma$. An easy direct limit argument implies that $\textbf{y}$ is a regular sequence on $A$.
\end{proof}

\begin{definition}\label{gen}
By $\mathbb{Z}^\infty$, we mean $\bigcup_{s\in \mathbb{N}} \mathbb{Z}^s$.
Let  $ M , N\subseteq \mathbb{Z}^\infty$ be   monoids and $M \subseteq N$. \begin{enumerate}
\item[$\mathrm{(i)}$] $M$ is called full in $N$, if  $\alpha, \alpha^\prime \in M$ and  $\alpha -\alpha^\prime \in  N $
, then $\alpha -\alpha^\prime \in M$.
\item[$\mathrm{(ii)}$] $M$ is called locally finitely generated, if $M\cap \mathbb{Z}^s$ is finitely generated for all $s\in \mathbb{N}$. \item[$\mathrm{(iii)}$]
    Recall that $M$ is said to be \emph{normal} if, whenever
$n( m-m^\prime) \in C$ for some positive integer $n$
 and $m,m^\prime \in M$, then $ m-m^\prime \in M$.
\end{enumerate}
\end{definition}

\begin{corollary}\label{semi2}
Let $C\subseteq \mathbb{Z}^\infty$ be a  normal locally finitely generated monoid. Then $k[C]$  is Cohen-Macaulay in the sense of Hamilton-Marley.
\end{corollary}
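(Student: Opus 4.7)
\medskip
\noindent\textbf{Proof proposal.}
The plan is to exhibit $k[C]$ as the direct limit of a chain of Noetherian Cohen-Macaulay rings whose transition maps are pure, and then to invoke Proposition \ref{purity}. The natural chain to consider is $\{k[C \cap \mathbb{Z}^s]\}_{s \in \mathbb{N}}$, where $\mathbb{Z}^s$ is embedded in $\mathbb{Z}^{s+1}$ (and hence in $\mathbb{Z}^\infty$) in the usual way. Since $C = \bigcup_{s \in \mathbb{N}} (C \cap \mathbb{Z}^s)$, one immediately obtains $k[C] = \varinjlim_s k[C \cap \mathbb{Z}^s]$.

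The first step is to check that each $k[C \cap \mathbb{Z}^s]$ is a Noetherian Cohen-Macaulay ring. Local finite generation of $C$ says that $C_s := C \cap \mathbb{Z}^s$ is a finitely generated monoid. For normality, if $n(m - m') \in C_s$ for some $m, m' \in C_s$ and $n \in \mathbb{N}$, then normality of $C$ gives $m - m' \in C$; but $m - m' \in \mathbb{Z}^s$ as well, so $m - m' \in C_s$. Thus $C_s$ is a finitely generated normal submonoid of $\mathbb{Z}^s$, and \cite[Theorem 6.3.5]{BH} (Hochster's theorem) shows that $k[C_s]$ is a Noetherian Cohen-Macaulay ring.

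The second (and main) step is to verify that the inclusions $k[C_s] \hookrightarrow k[C_t]$ are pure whenever $s \leq t$. I would first show that $C_s$ is full in $C_t$ in the sense of Definition \ref{gen}(i): if $\alpha, \alpha' \in C_s$ and $\alpha - \alpha' \in C_t \subseteq C$, then $\alpha - \alpha' \in \mathbb{Z}^s \cap C = C_s$. Given fullness, define the $k$-linear retraction $\pi : k[C_t] \to k[C_s]$ sending $X^c$ to $X^c$ if $c \in C_s$ and to $0$ otherwise. To see that $\pi$ is a $k[C_s]$-module homomorphism, note that for $a \in C_s$ and $c \in C_t \setminus C_s$ we must have $a + c \notin C_s$ (else fullness would give $c = (a+c) - a \in C_s$), so $\pi(X^a X^c) = 0 = X^a \pi(X^c)$. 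Thus $k[C_s]$ is a $k[C_s]$-module direct summand of $k[C_t]$, and the inclusion is pure.

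Once these two steps are in place, the chain $\{k[C_s]\}_{s \in \mathbb{N}}$ is a direct system of Noetherian Cohen-Macaulay rings whose structure maps are pure, so Proposition \ref{purity} gives that $k[C] = \varinjlim_s k[C_s]$ is Cohen-Macaulay in the sense of Hamilton-Marley. The subtlest point is the fullness/purity verification in step two; once the splitting $\pi$ is written down, everything else is bookkeeping.
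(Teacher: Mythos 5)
Your proof is correct and follows essentially the same route as the paper: write $k[C]=\varinjlim_{s}k[C\cap\mathbb{Z}^s]$, note each $k[C\cap\mathbb{Z}^s]$ is a finitely generated normal semigroup ring and hence Noetherian Cohen--Macaulay by \cite[Theorem 6.3.5]{BH}, check that the inclusions are pure via fullness, and apply Proposition \ref{purity}. The only difference is that where the paper cites \cite[Exercise 6.1.10(c)]{BH} for purity, you verify it directly with the $k[C_s]$-linear monomial retraction sending $X^c$ to $X^c$ for $c\in C_s$ and to $0$ otherwise, which is a perfectly valid substitute.
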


\begin{proof}
Set $C_n:= C\cap \mathbb{Z}^n$ for every $n\in \mathbb{N}$. Then $C_n$ is a normal monoid of $\mathbb{Z}^n$ and $C_n\subseteq C_{n+1}$. By \cite[Theorem 6.3.5]{BH}, $k[C_n]$ is a Noetherian Cohen-Macaulay  ring. Note that $C_n$ is full in $C_{n+1}$. It turns out that
$k[C_n] \hookrightarrow k[C_{n+1}]$ is pure, see \cite[Exercise 6.1.10(c)]{BH}.
Clearly, $k[C] = {\varinjlim}_{n\in \mathbb{N}}k[C_n]$. We can now use Proposition $\ref{purity}$ to conclude that $k[C]$  is Cohen-Macaulay in the sense of Hamilton-Marley.
\end{proof}

\begin{example}\label{br2}
Let $k$  be a field, $A:=k[X_1,\ldots]:=\bigcup_{i=1}^{\infty}R[X_1,\ldots,X_i]$ and $f:=X_{i_1}^{j_1}\ldots X_{i_{\ell}}^{j_{\ell}}\in A$. The degree of $f$ is defined by $d(f):=\sum_{k=1}^{\ell} j_k$. Let $d$ be a positive integer. We call the $k$-algebra  $A^{(d)}$ generated by all monomials of degree $d$, the $d$-th veronese subring of $A$. Then $A^{(d)}$ is Cohen-Macaulay in the sense of Hamilton-Marley. Indeed, look at
 $$C:=\{(n_i) \in  \mathbb{Z}^\infty :n_i\geq0 \textit{ and  } d \mid \sum n_i\}. $$ It remains to recall that
$A^{(d)}:=k[C]$ and $C$ is  normal and locally finitely generated.  We leave the details to the reader.
\end{example}

Our next aim is to construct an example to show that the assumptions of Proposition \ref{purity} are essential, see Theorem \ref{coexamhm2} for its application.
Let $\fa$ be an ideal of a ring $A$. By $\Var_{A}(\fa)$, we mean $\{\fp\in\Spec (A):\fa\subseteq \fp\}$.
\begin{lemma}\label{lem1}
Let $k$  be a field and  $A:=k+xk[x,y]$. Then
$\Var_{A}(xA) = \{ xk[x,y]\}$.
\end{lemma}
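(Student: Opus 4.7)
The plan is to identify $\mathfrak{m} := xk[x,y]$ as the unique prime of $A$ lying over $xA$ by showing (a) $\mathfrak{m}$ is itself a prime (in fact maximal) ideal of $A$ containing $xA$, and (b) every prime $\mathfrak{p}$ of $A$ with $xA \subseteq \mathfrak{p}$ must contain $\mathfrak{m}$. Step (a) is essentially a calculation with the decomposition $A = k + xk[x,y]$, while step (b) is the real content and relies on a simple ``squaring trick'' that absorbs an arbitrary element of $k[x,y]$ into $A$.

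\textbf{Step 1 (setting up $\mathfrak{m}$).} First I would verify that $\mathfrak{m} = xk[x,y]$ is an ideal of $A$: for $a_0 + xq \in A$ (with $a_0 \in k$, $q \in k[x,y]$) and $xr \in \mathfrak{m}$, the product $(a_0+xq)(xr) = x(a_0 r + xqr)$ lies again in $\mathfrak{m}$. Clearly $xA \subseteq \mathfrak{m}$. Next, the sum in the definition $A = k + xk[x,y]$ is direct as $k$-vector spaces (an element of $xk[x,y]$ has zero constant term), so $A/\mathfrak{m} \cong k$, proving that $\mathfrak{m}$ is a maximal ideal of $A$; in particular it is prime and it does contain $xA$.

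\textbf{Step 2 (every prime over $xA$ contains $\mathfrak{m}$).} Let $\mathfrak{p} \in \Var_A(xA)$, and take an arbitrary generator $xf \in \mathfrak{m}$ with $f \in k[x,y]$. The problem is that $xf$ need not equal $x \cdot (\text{something in } A)$ because $f$ itself may not lie in $A$ (for instance $f = y$). The key observation is
\[
(xf)^2 \;=\; x \cdot (xf^2),
\]
and here $xf^2 \in xk[x,y] \subseteq A$, so $(xf)^2 \in xA \subseteq \mathfrak{p}$. Primality of $\mathfrak{p}$ then forces $xf \in \mathfrak{p}$. Hence $\mathfrak{m} \subseteq \mathfrak{p}$, and by the maximality from Step 1, $\mathfrak{p} = \mathfrak{m}$.

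\textbf{Conclusion and main obstacle.} Combining the two steps, $\Var_A(xA) = \{\mathfrak{m}\} = \{xk[x,y]\}$, as claimed. There is no serious obstacle; the only point requiring care is that one cannot simply factor $xf$ as $x$ times an element of $A$, which is why the squaring identity $(xf)^2 = x\cdot(xf^2) \in xA$ is needed in order to push an arbitrary element of $\mathfrak{m}$ into the radical of $xA$ inside $A$.
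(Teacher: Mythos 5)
Your proof is correct and follows essentially the same route as the paper: the paper also uses the squaring trick (it notes $(xy^n)^2 \in \fp$ forces $xy^n \in \fp$ for all $n$, which is your argument applied to the monomial generators $xy^n$ instead of an arbitrary $xf$) and then concludes by maximality of $xk[x,y]$. Your Step 1 merely spells out the maximality claim, which the paper leaves as an easy observation.
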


\begin{proof}
Let $\fp \in \Spec(A)$ be such that $xA \subseteq \fp$.    Then $(xy^n)^2 \in \fp$ and so $xy^n \in \fp$ for all $n \in \mathbb{N}_0$. This implies that $xk[x,y] \subseteq \fp$. It is not difficult to observe that $xk[x,y]$ is a maximal ideal of $A$. Therefore $xk[x,y] = \fp$, as claimed.
\end{proof}

\begin{example}\label{coexamhm1}
Let $k$  be a field. For each $n$, set $H_n := \{(a,b) \in \mathbb{N}_0^2| 0 \leq b/a \leq n\} \cup \{(0,0)\}$ and $H := \{(a,b) \in \mathbb{N}_0^2| 0 \leq b/a < \infty\} \cup \{(0,0)\}$. Then the following holds:
\begin{enumerate}
\item[$\mathrm{(i)}$] $H_n$ is a normal submonoid of $\mathbb{Z}^2$.
\item[$\mathrm{(ii)}$] $H_n$ is finitely generated, and so $k[H_n]$ is  Noetherian and Cohen-Macaulay.
\item[$\mathrm{(iii)}$] For each $n \in \mathbb{N}_{0}$, set $A_n := k[H_n]$ and $A :=k + xk[x,y]$. Then $$A = k[H] = \bigcup_{n \in \mathbb{N}_0} A_n.$$
\item[$ \mathrm{(iv)}$] Let $\pi_n : A_n/xyA_n \to  A_{n+1}/xyA_{n+1}$ be the natural homomorphism of rings  for all $n \in \mathbb{N}_0$. Then $\{A_n /xyA_n, \pi_n\}_{n \in \mathbb{N}_0}$ is a direct system of Noetherian Cohen-Macaulay rings such that $$A/xyA \cong {\varinjlim}_{n\in \mathbb{N}_0} A_n/xyA_n$$ and $A/xyA$ is not Cohen-Macaulay in the sense of Hamilton-Marley.
\end{enumerate}
\end{example}

\begin{proof}
$\mathrm{(i)}$: First, we remark  trivially that, $H_n$ is a monoid.
Let $(a,b),(c,d)\in H_n$ and suppose there is $k \in \mathbb{N}$ such that $k((a,b)-(c,d))\in  H_n$.
 Clearly, $$0 \leq (b-d)/(a-c)=k(b-d)/k(a-c)\leq n,$$and so $(a,b)-(c,d)\in  H_n$. This completes the proof.

$\mathrm{(ii)}$: For each $n$, let  $C_n\subseteq H_n$ be the submonoid generated by $\{(1,0),(1,1), \ldots,(1,n)\}$. We prove $C_n= H_n$ by induction on $n$. When $n=0$ there are nothing to prove. Now suppose, inductively, that $ n > 0$ and the result has been proved for $n-1$.  Then $H_{n-1} \subseteq C_n$. Take $(a,b)\in H_{n}\setminus H_{n-1}$.
This means that $ n-1 < b/a \leq n$. It implies  that $b = an - i$ where $0 \leq i \leq a-1$. In the light of $$(a,b) = (a,an-i) = i(1,n-1)+ (a-i)(1,n)$$ we conclude  that $(a,b) \in C_n$. So $H_n \subseteq C_n$. By \cite[Theorem 6.3.5]{BH}, $K[H_n]$ is a Noetherian  Cohen-Macaulay ring.

$\mathrm{(iii)}$: Recall from  Example \ref{coexamhm} that $ k[H] = A$. Also, $k[H] = \bigcup_{n \in {\mathbb{N}_0}} A_n $, because $H = \bigcup_{n \in {\mathbb{N}_0}}H_n$.

$\mathrm{(iv)}$:
Clearly $\bar{A}_n := A_n /xyA_n$ is Cohen-Macaulay and $\bar{A} := A/xyA = {\varinjlim}_{n\in \mathbb{N}}\bar{A}_n$.
 Denote the natural image of $a \in A$ in $\bar{A}$ by $\bar{a}$. Keep  in mind that $\bar{y}\notin \bar{A}$.  This force that $\bar{xy}^2 \neq 0$ but $\bar{x} \bar{xy}^2 = 0$. Thus $\bar{x}$ is not regular.
Having this, in order to show $\bar{A}$ is not Cohen-Macaulay in the sense of Hamilton-Marley, it is enough to show $\bar{x}$ is parameter in $\bar{A}$.
 Clearly, $\bar{x}\bar{A} \neq \bar{A}$. Suppose $H_{\bar{\textbf{x}}\bar{A}}^{1}(\bar{A})_{\fq} = 0$ for some $\fq \in \Var_{\bar{A}}(\bar{x}\bar{A})$ and  search for a contradiction. We have
 $\Supp_{\bar{A}}(H_{\bar{\textbf{x}}\bar{A}}^{1}(\bar{A}))\subseteq \Var_{\bar{A}}(x\bar{A})$.
 So, in view of Lemma \ref{lem1},
$0 =H_{\bar{\textbf{x}}\bar{A}}^{1}(\bar{A})= \bar{A}_{\bar{x}}/{\bar{A}}$. Thus, there exists $f \in A$ such that  $1/{\bar{x}} = \bar{f}/1$ in $\bar{A}_{\bar{x}}$. Consequently $(1 - fx)x^i = xyg$ for some $g\in A$ and $i \in \mathbb{N}_0$. One can use this easily to find a contradiction  (note that $A$ is not quasi-local). Therefore, $H_{\bar{\textbf{x}}\bar{A}}^{1}(\bar{A})_{\fq} \neq 0$ for all $\fq \in \Var_{\bar{A}}(\bar{x}\bar{A})$. To complete the proof, we need to prove $\bar{x}$ is  weakly proregular. In the light of \cite[Page 346]{HM}, it remains to show $$(xyA :_A x) = xyk[x,y] = (xyA:_A x^2).$$ We do this in the lines through with \cite[Example 4.9]{HM}.    Clearly, $xyk[x,y] \subseteq (xyA :_A x^2)$. Reversely, take $a \in (xyA :_A x^2)$. Then $ax^2 = xy a^{\prime}$ for some $a^{\prime} \in A$, i.e., $ax = y(c + bx)=yc + bxy$ where $c\in k$ and $b\in k[x,y]$  and conclude that $yc \in A$. Therefore $c=0$ and so $a = by $. Let $a = d + xh$ where $d \in k$ and $h \in k[x,y]$. Then $by = d + xh$. Thus $d=0$ and so $a = xh = by$. This means $a \in xyk[x,y]$. This completes the proof of the first equality. The proof of $(xyA :_A x) = xyk[x,y]$ is similar.
\end{proof}

Our next result is Theorem \ref{coexamhm2}. To prove it we need a couple of lemmas.

\begin{lemma}\label{1}
Let $H$ be the normal monoid $  \{(a,b) \in \mathbb{N}_0^2| 0 \leq b/a < \infty\} \cup \{(0,0)\}$ and let $k$ be a  field.
Let $f$ and $g$  be in $xk[x,y]$. Then $f,g$ is not a parameter sequence in $k[H]$.
\end{lemma}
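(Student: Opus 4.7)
The plan is to show that condition~(3) of Definition~\ref{defhm} fails for the pair $f,g$; that is, $H^{2}_{f,g}(k[H])=0$. Writing $A=k[H]=k+xk[x,y]$, we have $(f,g)A\subseteq xk[x,y]\subsetneq A$, so $\V_A((f,g)A)\neq\emptyset$, and global vanishing of the second \v{C}ech cohomology forces condition~(3) to fail at every $\fp\in\V_A((f,g)A)$.

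The technical heart of the argument is the identification $A_h=k[x,y]_h$ (as subrings of $k(x,y)$) for every nonzero $h\in xk[x,y]$. The inclusion $A_h\subseteq k[x,y]_h$ is automatic. Conversely, for any $p\in k[x,y]$, the product $ph$ lies in $xk[x,y]\subseteq A$ (since $xk[x,y]$ is an $A$-submodule of $k[x,y]$), and hence $p=ph/h\in A_h$. Assuming first $f,g\neq 0$, applying this to $f$, $g$, and $fg$ turns the \v{C}ech complex $0\to A\to A_f\oplus A_g\to A_{fg}\to 0$ into $0\to A\to k[x,y]_f\oplus k[x,y]_g\to k[x,y]_{fg}\to 0$ in degrees $\ge 1$; in particular,
\[ H^{2}_{f,g}(A)=\coker\bigl(k[x,y]_f\oplus k[x,y]_g\to k[x,y]_{fg}\bigr)=H^{2}_{(f,g)}(k[x,y]). \]
Now $f,g\in xk[x,y]$ yields $(f,g)k[x,y]\subseteq (x)$, so $\sqrt{(f,g)k[x,y]}$ is a radical ideal of height at most one in the UFD $k[x,y]$; therefore it is principal, and the \v{C}ech complex of $k[x,y]$ with respect to a single element has only two terms, giving $H^{2}_{(f,g)}(k[x,y])=0$. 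The degenerate cases $f=0$ or $g=0$ make $A_{fg}=0$, so $H^{2}_{f,g}(A)=0$ trivially.

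The main obstacle is the identification $A_h=k[x,y]_h$: it is precisely here that the peculiar structure of $A$ is exploited, for $y\notin A$ yet $y$ reappears in every localization $A_h$ with $h\in xk[x,y]$, by virtue of the inclusion $xk[x,y]\subseteq A$. Once this is in place, the rest is routine \v{C}ech-cohomology bookkeeping together with the standard vanishing of $H^{2}$ for ideals of height at most one in the two-dimensional regular UFD $k[x,y]$.
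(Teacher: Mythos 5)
Your reduction $H^{2}_{f,g}(A)\cong H^{2}_{(f,g)}(k[x,y])$, via the pretty observation that $A_h=k[x,y]_h$ for every nonzero $h\in xk[x,y]$, is correct. The fatal step is the next one: from $(f,g)k[x,y]\subseteq (x)$ you infer that $\sqrt{(f,g)k[x,y]}$ ``has height at most one and is therefore principal.'' Containment in the height-one prime $(x)$ only bounds the height of the ideal, i.e.\ the \emph{minimum} of the heights of its minimal primes; it does not exclude minimal primes of height two, and a height-one radical ideal in a UFD is principal only when it is height-unmixed. Concretely, take $f=xy$ and $g=x(x+y+1)$, both in $xk[x,y]$. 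A prime of $k[x,y]$ contains $(f,g)$ iff it contains $x$ or contains $(y,x+1)$, so $\sqrt{(f,g)k[x,y]}=(x)\cap(y,x+1)$, which is not principal. Since $(x)+(y,x+1)=k[x,y]$, Mayer--Vietoris gives
$$H^{2}_{(f,g)}(k[x,y])\cong H^{2}_{(x)}(k[x,y])\oplus H^{2}_{(y,x+1)}(k[x,y])\cong H^{2}_{(y,x+1)}(k[x,y])\neq 0,$$
so by your own identification $H^{2}_{f,g}(A)\neq 0$. Thus the asserted global vanishing is false, and condition (3) of Definition \ref{defhm} cannot be refuted globally; it has to be refuted at a suitable prime.

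The lemma survives because condition (3) does fail at the particular prime $\fm:=xk[x,y]$, and that is how the paper argues: assuming $f,g$ is a parameter sequence, \cite[Proposition 3.6]{HM} forces $\Ht_A((f,g)A)=2=\dim A$, hence $\fm$ is minimal over $(f,g)A$; then $\rad((f,g)A_{\fm})=\fm A_{\fm}=\rad(xA_{\fm})$ by Lemma \ref{lem1}, so $H^{2}_{f,g}(A)_{\fm}\cong H^{2}_{x}(A)_{\fm}=0$, contradicting condition (3) at $\fm$. Your computation could be repaired in the same localized spirit: $H^{2}_{(f,g)}(k[x,y])$ is torsion with respect to the height-two minimal primes of $(f,g)k[x,y]$, none of which contains $x$, and each such maximal ideal $\fq$ meets $A\setminus\fm$ (for instance $1-x^{2}w\in\fq\cap A$ for $w$ an inverse of $x^{2}$ modulo $\fq$), whence $H^{2}_{f,g}(A)_{\fm}=0$ while $\fm\in\V((f,g)A)$. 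But as written, the global vanishing claim --- and with it the proof --- is incorrect.
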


\begin{proof}
By Example \ref{coexamhm1}, we know that $A := k[H]= k + xk[x,y]$.
Suppose on the contrary that $f,g \in xk[x,y]$ is  a parameter sequence.
By \cite[Theorem 21.4]{Gi}, $\dim k[H]=\dim k[\mathbb{Z}^2]$. It follows by
\cite[Theorem 17.1]{Gi} that  $\dim k[\mathbb{Z}^2]=2$.
Now, in view of  \cite[Proposition 3.6]{HM}, we deduce that $\Ht_{A}((f,g)A) = 2$. Let $\fm := xk[x,y]$.  Hence   $ \fm \in \min_A((f,g)A)$.
 By Lemma \ref{lem1},
$$\rad_{A_{\fm}} (x) = \fm A_{\fm}=\rad _{A_{\fm}}(f,g).$$ So $H_{f,g}^2(A)_\fm = H_{x}^2(A)_\fm = 0$, a contradiction.
\end{proof}

\begin{lemma}\label{2}
Adopt the  notation of Lemma \ref{1} and let
$f\in k[H]$ be such that $f(0)\neq 0$.  If $\fp \in \Var_{k[x,y]}(f)$ is of height one, then there is $f_1\in k[H]$ such that $\fp=f_1k[x,y]$ and $f_1(0)\neq 0$.
\end{lemma}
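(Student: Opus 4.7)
The plan is to exploit the unique factorization in $k[x,y]$. Since $\fp$ has height one and $k[x,y]$ is a UFD, there exists an irreducible polynomial $g \in k[x,y]$ with $\fp = g\, k[x,y]$. My candidate for $f_1$ will simply be $g$ itself. I need to check two things: that $g$ already belongs to $k[H]$, and that $g(0,0)\neq 0$.

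To see both at once, I would use the description $k[H] = k + xk[x,y]$ from Corollary \ref{coexamhm}, which translates into the purely elementary condition: a polynomial $h \in k[x,y]$ lies in $k[H]$ if and only if $h(0,y)$ is a constant (namely $h(0,0)$). Because $f \in \fp$, I can write $f = g h$ for some $h \in k[x,y]$. Evaluating at $x = 0$ gives the identity $f(0,y) = g(0,y)\, h(0,y)$ in $k[y]$. By hypothesis $f \in k[H]$ with $f(0,0)\neq 0$, so the left-hand side is a nonzero constant.

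Now $k[y]$ is an integral domain whose units are exactly $k^{\ast}$, so a product of two polynomials equal to a nonzero constant forces each factor to be a nonzero constant. Applying this to $g(0,y)\,h(0,y)$, I conclude that $g(0,y) \in k^{\ast}$. This single statement gives both requirements simultaneously: it says $g$ has no pure $y$-monomials of positive degree, hence $g \in k + xk[x,y] = k[H]$, and it says $g(0,0) = g(0,y) \neq 0$. Setting $f_1 := g$ therefore completes the proof.

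I do not anticipate a serious obstacle here; the argument is essentially a one-line unique-factorization calculation once the characterisation $h \in k[H] \iff h(0,y)\in k$ is in hand. The only minor point to be careful about is that irreducibility of $g$ in $k[x,y]$ (rather than in $k[H]$) is what is needed to reduce $\fp$ to a principal generator, and that any two generators of $\fp$ differ by a unit of $k[x,y]$, i.e.\ by a nonzero scalar — which preserves membership in $k[H]$ and nonvanishing at the origin.
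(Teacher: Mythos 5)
Your proof is correct, and it shares the paper's overall skeleton — both arguments start from unique factorization in $k[x,y]$ to write $\fp=f_1k[x,y]$ and then must check that the generator lies in $k[H]=k+xk[x,y]$ and does not vanish at the origin — but your verification of that crucial membership step is genuinely different and simpler. The paper expands $f_1=a+\sum h_i$ into homogeneous components, splits the indices according to whether $x$ divides $h_i$, multiplies out $f=f_1g_1$, and runs a maximal-degree argument to force the ``pure $y$'' part $h_J$ to vanish. You instead observe that $h\in k[H]$ exactly when $h(0,y)\in k$, apply the evaluation homomorphism $x\mapsto 0$ to $f=f_1g_1$ to get $f(0,y)=f_1(0,y)\,g_1(0,y)$ equal to the nonzero constant $f(0,0)$, and use that in the domain $k[y]$ a product of two polynomials is a nonzero constant only if both factors are, which simultaneously yields $f_1\in k[H]$ and $f_1(0,0)\neq 0$. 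Your route buys a one-line replacement for the paper's combinatorial bookkeeping (and makes transparent why $f(0)\neq 0$ is needed), while the paper's component-wise computation is self-contained in the sense that it never isolates the characterization $k[H]=\{h: h(0,y)\in k\}$ explicitly; both are complete proofs, and your remark that any two generators of $\fp$ differ by a scalar correctly disposes of the only ambiguity in the choice of $f_1$.
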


\begin{proof}  Note that $\fp = f_1 k[x,y]$ for some $f_1 \in k[x,y]$, because $k[x,y]$ is an unique factorization domain, see \cite[Theorem 20.1]{Mat}. We show that $f_1 \in  k[H]$. Represent $f_1$ by $ a + \sum_{i=1}^{i=n} h_i$ where $a \in k$ and $h_i$ are homogeneous elements of degree $i$. Set $$I:=\{i|x \textit{ divides }h_i\}$$ and $J:=\{1, \ldots,n\}\setminus I$.
Since $f \in  f_1 k[x,y]$, we have $f = f_1 g_1$ for some  $g_1  \in k[x,y]$. Note that $g_1 =\sum_{i=1}^{i=m} g_i^{\prime} + b$ where $b \in k$ and  $g_i^ {\prime}$ are homogeneous elements of degree $i$. Set $$I':=\{i|x \textit{ divides }g_i'\}$$ and $J':=\{1, \ldots,m\}\setminus I'$.  Write $f$ as $c + xh$ for some $0 \neq c \in k$ and $h \in k[x,y]$. Hence $$ c + xh = (a + h_I + h_J)(g_{I^{\prime}}+ g_{J^{\prime}} + b),$$ where $h_I:=\sum_{i \in I} h_i$ and $h_J:=\sum_{j \in J}h_j$. The terms $g_{I^{\prime}
}, g_{J^{\prime}}$ are defined in a similar way. Note that $x$ divides $ h_J g_{J^{\prime}} + h_J b+ a g_{J^{\prime}} $. Choose $i\in J$ and $j\in{J^{\prime}}$ be such that $h_ig_j^{\prime}$ is a homogeneous term in $h_J g_{J^{\prime}}$ of maximal degree. Thus $x \mid h_ig_j^{\prime}$. So, either  $h_J =0$ or $g_{J^{\prime}}=0$. If $g_{J^{\prime}}=0$, then $x \mid h_J b$. This means that $h_J =0$. In both cases,  $h_J =0$. Therefore,  $f_1 \in k[H]$ as claimed. The fact  $f_1(0)\neq 0$ follows easily from $f(0)\neq 0$ and $f=f_1g_1$.
\end{proof}

\begin{theorem}\label{coexamhm2}
Let $H$ be the normal semigroup $  \{(a,b) \in \mathbb{N}_0^2| 0 \leq b/a < \infty\} \cup \{(0,0)\}$ and let $k$ be a  field. Then the following statements are true.
\begin{enumerate}
\item[$\mathrm{(i)}$]$k[H]$ is not  Cohen-Macaulay in the sense of ideals.
\item[$\mathrm{(ii)}$]$k[H]$ is not weak bourbaki unmixed.
\item[$\mathrm{(iii)}$]
 $k[H]$ is  Cohen-Macaulay in the sense of Hamilton-Marley.
\end{enumerate}
\end{theorem}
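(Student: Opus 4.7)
Throughout, write $A:=k[H]=k+xk[x,y]$ (Corollary~\ref{coexamhm}), and let $\fm:=xk[x,y]$, the unique prime of $A$ containing $x$, of height $2$ by Lemma~\ref{lem1}. For (i) I take $\fa:=(x,xy)A$: Lemma~\ref{lem1} gives $\Var_A(\fa)\subseteq\Var_A(xA)=\{\fm\}$, so $\min(\fa)=\{\fm\}$ and $\Ht_A(\fa)=2$, while Lemma~\ref{pro}(i) gives $\pgrade_A(\fa,A)\le 2$. The proof of Lemma~\ref{1} (which used $\rad_{A_\fm}(x,xy)=\rad_{A_\fm}(x)$ and $H^2_x=0$) shows $H^2_{x,xy}(A)_\fm=0$; since $\Supp H^2_{x,xy}(A)\subseteq\{\fm\}$, this forces $H^2_{x,xy}(A)=0$, ruling out $\Cgrade_A(\fa,A)=2$, so $\pgrade_A(\fa,A)=\Cgrade_A(\fa,A)\le 1<2=\Ht_A(\fa)$ by \cite[Proposition 2.3]{AT}.

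For (ii) I take $\fa:=xyA$: its minimal primes arise by contraction from those of $(xy)$ in $k[x,y]$, namely $(x)$ and $(y)$, yielding $\fm$ and $xyk[x,y]$; since $xyk[x,y]\subsetneq\fm$, $\min(\fa)=\{xyk[x,y]\}$, of height $1=\mu(\fa)$. A short direct calculation based on the observation that $b\in A$ and $by\in A$ force $b\in\fm$ gives $(xyA:_A xy^2)=\fm$, whence $\fm\in\wAss_A(A/xyA)$; as $\fm\notin\min(\fa)$, this embedded weakly associated prime witnesses the failure of weak Bourbaki unmixedness.

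For (iii), which I expect to be the main obstacle, I plan to show that every strong parameter sequence is regular. Since $\dim A=2$ and a parameter sequence of length $\ell$ has height at least $\ell$ by \cite[Proposition 3.6]{HM}, such sequences have length at most $2$; length one is regular as $A$ is a domain. For a length-two strong parameter sequence $x_1,x_2$, Lemma~\ref{1} precludes $\{x_1,x_2\}\subseteq\fm$, so at least one of $x_1,x_2$ has nonzero constant term. I first establish $\gcd(x_1,x_2)=1$ in $k[x,y]$: a common irreducible factor of the form $\alpha x$ would force both $x_i\in\fm$, excluded; any other common irreducible factor $d$ provides, via Lemma~\ref{2} applied to whichever of $x_1,x_2$ has nonzero constant, an element $f_1\in A$ with $f_1(0)\ne 0$ generating $(d)$ in $k[x,y]$, and then $f_1A=(d)k[x,y]\cap A$ is a height-one prime of $A$ distinct from $\fm$ containing $(x_1,x_2)A$, contradicting $\Ht_A((x_1,x_2)A)\ge 2$. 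Given coprimality, regularity follows from: if $ax_2=x_1b$ in $A$, then in $k[x,y]$ write $a=x_1c$, so $b=cx_2$; writing $c=c(0,y)+x\tilde c$, either the identity $a=x_1c$ with $x_1(0)\ne 0$, or $b=cx_2$ with $x_2(0)\ne 0$, forces $c(0,y)\in k$, hence $c\in A$ and $a\in x_1A$. The principal obstacle is this descent, with its small case split according to which of $x_1,x_2$ supplies the nonzero constant.
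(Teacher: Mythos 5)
Your overall architecture is sound, and for (ii) and (iii) it largely parallels the paper: in (ii) you use the same ideal $xyA$, the same embedded prime $\fm=xk[x,y]$ and the same minimal prime $xyk[x,y]=(xy,xy^2,\ldots)A$ (your direct computation $(xyA:_A xy^2)=\fm$ is correct and is a pleasant substitute for the paper's appeal to \cite{G}); in (iii) you use the same skeleton (length bound via \cite[Proposition 3.6]{HM} and $\dim A=2$, Lemma \ref{1} to guarantee a nonzero constant term, Lemma \ref{2} to exclude common irreducible factors, then descent of regularity from $k[x,y]$), and your symmetric descent --- splitting on which of $x_1,x_2$ has nonzero constant term rather than permuting the sequence --- is in fact cleaner than the paper's ``without loss of generality permute'' step, since it proves regularity in the given order. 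Part (i) is a genuinely different route: the paper deduces (i) from the failure of Hamilton--Marley Cohen--Macaulayness of $A/xyA$ (Example \ref{coexamhm1}) via \cite[Lemma 3.5, Theorem 3.4]{AT}, while you exhibit an ideal with $\pgrade<\Ht$ directly by a $\check{C}$ech computation; this is close in spirit to the paper's closing Remark (the principal ideal $xA$ violates GPIT), and indeed $\fa=xA$ alone would suffice, a one-element $\check{C}$ech complex having no $H^{2}$.

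Three steps are asserted rather than proved, and one is genuinely delicate. (1) In (iii) you claim $\fq:=dk[x,y]\cap A$ is a height-one prime of $A$; this is not automatic in this ring --- principal ideals of $A$ can have height two, e.g.\ $\Ht_A(xA)=2$ --- so it needs an argument. It is true and fixable: since $d$ is not an associate of $x$, we have $x\notin\fq$, and $A_x=k[x,y]_x$ (any $f/x^{n}$ equals $xf/x^{n+1}$ with $xf\in A$), so the primes of $A$ below $\fq$ correspond to the primes of $k[x,y]_x$ below $dk[x,y]_x$, whence $\Ht_A(\fq)=\Ht_{k[x,y]}(dk[x,y])=1$. (Alternatively argue as the paper does: $\fq$ is minimal over $(x_1,x_2)A$ because every prime containing that ideal has height $2=\dim A$, and then $H^{2}_{x_1,x_2}(A)_{\fq}\cong H^{2}_{f_1}(A)_{\fq}=0$ contradicts condition (3) of the parameter-sequence definition; this uses $\fq=f_1A$, which requires the same computation as the paper's $(\ast)$.) (2) In (ii), ``minimal primes arise by contraction from those of $(xy)$ in $k[x,y]$'' is not a valid principle for this non-flat, non-integral inclusion; the conclusion is nevertheless correct because $xyk[x,y]$ is prime in $A$ (it equals $yk[x,y]\cap A$), contains $xyA$, and lies in $\rad(xyA)$ since $(xyf)^{2}=xy\cdot(xyf^{2})\in xyA$, so it is the unique minimal prime. (3) In (i), $\Ht_A(\fm)=2$ does not follow from Lemma \ref{lem1}; you need the chain $0\subset xyk[x,y]\subset\fm$ (which your part (ii) supplies) for the inequality $\geq 2$, and $\dim A=2$ (Gilmer, as used in the proof of Lemma \ref{1}) for $\leq 2$ --- the latter is also what makes ``$\pgrade_A(\fa,A)\le 2$'' from Lemma \ref{pro}(i) effective. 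With these repairs the argument is complete.
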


\begin{proof}
We use the notation of Example \ref{coexamhm1}.

$\mathrm{(i)}$: Suppose on the contrary that $A := k[H]$ is Cohen-Macaulay in the sense of ideals.
Lemma 3.5 in \cite{AT} implies that $\bar{A} :=k[H]/(xy)$ is Cohen-Macaulay in the sense of ideals.
It follows from \cite[Theorem 3.4]{AT} that $\bar{A}$ is Cohen-Macaulay in the sense of Hamilton-Marley.
But Example \ref{coexamhm1} says that $\bar{A}$ is not Cohen-Macaulay in the sense of Hamilton-Marley. This is a contradiction.

$\mathrm{(ii)}$: Set $A := k[H]$ and $\fp= (xy,xy^2,\ldots)A$. First we show $\fp\in\Spec(A)$.
The assignment $ f + \fp\mapsto f(x,0)$
defines a ring homomorphism $\pi : A/\fp \to k[x]$.
Note that any  $f \in A$ is of the form $f = c + x (\sum_{i=0}^{i=n}f_i y^i)$ where $f_i \in k[x]$ and $c\in k$.
This says  that $\pi$ is injective. Trivially,  $\pi$ is surjective. So  $\fp\in\Spec(A)$.

Keep in mind that $\Ht_A(xyA) \geq 1$.
Thus we can deduce the claim from the following three statements:

\begin{enumerate}
\item[$\mathrm{(a)}$] $\min_A(xyA) = \{\fp\}$. Indeed, we show every prime ideal containing $xy$ contains $\fp$.  Let $\fq \in \Var_A(xyA)$. If $x \in \fq$, then by Lemma \ref{lem1}, $\fq = xk[x,y]$. So $\fp\subseteq\fq$. Thus, without loss of the   generality, we can assume  that $x\notin\fq$. It follows from   $x^{m-1}(xy^m)\in\fq$ for all $m \geq 2$ that  $xy^m \in \fq$ for all $m \geq 2$. Hence $\fp \subseteq \fq$.
\item[$\mathrm{(b)}$] $xk[x,y] \in \wAss_A(A/xyA)$. Indeed, note that $\bar{x}$ is zero divisor in $\bar{A}$. Hence by \cite[Theorem 3.3.1(ii)]{G}, there exists  $\fq \in \wAss_A(A/xyA)$ such that $x  \in  \fq$. By Lemma \ref{lem1}, $\fq = xk[x,y]$. So $xk[x,y] \in \wAss_A(A/xyA)$.
\item[$\mathrm{(c)}$]  $xk[x,y] \neq \fp$. This is trivial, because $x\notin \fp$.
\end{enumerate}

$\mathrm{(iii)}$: Recall that $A := k[H]= k + xk[x,y]$. Incorporate  \cite[Proposition 3.6]{HM} with \cite[Theorem  17.1 and 21.4]{Gi} to deduce that the length of any parameter sequence
is bounded by two. Since $A$ is domain, any parameter element is regular. It remains to study  parameter sequences of length two.
Let $f,g \in A$ be a parameter sequence.   Keep in mind that permutation of a parameter sequence is a parameter sequence. In view of Lemma \ref{1}, without loss of the generality, we can assume that $f := c + xh$ where $0 \neq c \in k$ and $h \in k[x,y]$. Look at the naturel inclusion $A \hookrightarrow k[x,y]$. Clearly, $fA \subseteq (f) k[x,y]  \cap A$. Reversely, suppose $fg\in A$ for some $g\in k[x,y]$. Let $d:=g(0,0)$ and $\overline{g}:=g-d$.
 Then $\overline{g}c + x\overline{g}h + dxh + cd \in A$. This yields that $\overline{g}=xg'$ for some $g'\in k[x,y]$. Thus, $g=\overline{g}+ d \in A $. This, truly, claims that  $$fA = (f) k[x,y]  \cap A  \  \ (\ast).$$ Now we use this fact to reduce the issue in the case $(f,g)k[x,y]\subsetneqq k[x,y]$.
Indeed, if $(f,g)k[x,y] = k[x,y]$, then $g + fk[x,y]$ is unit in the ring $k[x,y]/fk[x,y]$. Note that  $(f) k[x,y]  \cap A = (f) A$. This implies that $f,g$ is a regular sequence in $A$. Thus we can assume $$(f,g)k[x,y]\subsetneqq k[x,y].$$
There are two  possibilities:
\begin{enumerate}
\item[$\mathrm{(a)}$]: Height of $(f,g)k[x,y]$ is one. Let $\fp \in \Var_{k[x,y]}((f,g)k[x,y])$ be such that $\Ht(\fp)=1$. By Lemma \ref{2}, there is $f_1 \in A$ such that
 $\fp=f_1k[x,y]$ and $ a:=f_1(0) \neq 0$. Then by the same idea as $(\ast)$,
    $\fp \cap A = f_1 A$. Denote $\fq:= f_1A$. Then, since $$\Ht_A((f,g)A)=\dim A=2,$$ $\fq$ is a minimal prime ideal of $(f,g)A$. Look at $H_{f,g}^2(A)_\fq = H_{f_1}^2(A)_\fq = 0$. This  contradiction says that $\mathrm{(a)}$ is not the case.
\item[$\mathrm{(b)}$]:  Height of $(f,g)k[x,y]$ is two. It follows from  Cohen-Macaulayness of $k[x,y]$ that $f,g$ is a regular sequence in $k[x,y]$. Since $fk[x,y] \cap A = f A$, we deduce $f,g$ is  a regular sequence in $A$.
\end{enumerate}
The proof is now  complete.
\end{proof}

\begin{remark}
Let $A=k[H]$ be as Theorem \ref{coexamhm2}.

$\mathrm{(i)}$
 By \cite[Theorem 3.10]{AT}, Cohen-Macaulayness in the sense of ideals implies weak bourbaki unmixedness when the ring is coherent.
In order to show  Theorem  \ref{coexamhm2}$\mathrm{(i)}$ is independent from Theorem \ref{coexamhm2}$\mathrm{(ii)}$,
we claim that $A$ is not coherent. Indeed, $(xyA :_A x) = xyk[x,y]$ is not finitely generated as  an ideal of $A$.
By part (6) of \cite[Theorem 2.3.2]{G}, $A$ is not coherent.

$\mathrm{(ii)}$ Recall from \cite{ADEH} that  a ring $R$ satisfies
\textmd{GPIT}, if
$\Ht(\fp)\leq n$ for each prime ideal $\fp$ of $R$ which is minimal
over an $n$-generated ideal of $R$. Any Cohen-Macaulay ring in the sense of ideals satisfies \textmd{GPIT}. The ring $A$
does not satisfy \textmd{GPIT}, because the principal ideal $xA$ is of height two.
To this end, in view of Lemma \ref{lem1}, $\min(xA)=xk[x,y]$. By the display item $\mathrm{(a)}$ in the proof of Theorem \ref{coexamhm2}(ii), $(xy,xy^2,\ldots)A$
is prime. Thus $\Ht(xA)\geq 2$. We know that $\dim A=2$. So $\Ht(xA)= 2$. This provides
a second proof for
 Theorem  \ref{coexamhm2}(i).
\end{remark}

\section{Concluding remarks and questions}

Inspired by  Theorem \ref{coexamhm2} we ask the following:

\begin{question}\label{1}Let $H$ be a normal submonoid of $\mathbb{Z}^n$.
Is $k[H]$ Cohen-Macaulay  in the sense of Hamilton-Marley?
\end{question}

Inspired by the proof  of Theorem \ref{coexamhm2} and as suggested by the referee, we ask the following question:

\begin{question}\label{2} Suppose $R$ is a graded ring such that every homogeneous (monomial)
strong parameter sequence is a regular sequence. Must $R$ be Cohen-Macaulay  in the sense of Hamilton-Marley?
\end{question}

\begin{remark}
Question \ref{1} follows by Question \ref{2}. Indeed,
it is proved in \cite{AD} that any monomial
strong parameter sequence  of $k[H]$ is a regular sequence.
Clearly, this yields the claim.
\end{remark}

\begin{acknowledgement}
We thank the anonymous referee for his/her detailed review.
\end{acknowledgement}



\begin{thebibliography}{99}

\bibitem{ADEH}
D.F. Anderson, D.E. Dobbs,
P.M. Eakin and W.J. Heinzer, \emph{ On the generalized principal
ideal theorem and Krull domains}, Pacific J. Math., {\bf 146}(2),
(1990), 201--215.



\bibitem{AT}
M. Asgharzadeh and M. Tousi, \emph{On the notion of Cohen-Macaulayness for non-Noetherian rings}, J. Algebra, {\bf {322}} (2009), 2297--2320.	

\bibitem{AD}
M. Asgharzadeh and M. Dorreh, \emph{Cohen-Macaulayness of non-affine normal semigroups}, arXiv.math AC:1302.5919.


\bibitem{Av}
L. Avramov, \emph{Locally complete intersection homomorphisms and a conjecture of Quillen on the vanishing of cotangent homology}, Ann. of Math. (2). {\bf {150}}, (1999), 455–-487.


\bibitem{B}
H. Brenner,  \emph{Lifting chains of prime ideals}, J. Pure Appl. Algebra {\bf{179}}, (2003), 1-–5.

\bibitem{BS}
M. Brodmann and R.Y. Sharp, \emph{Local Cohomology: An Algebraic Introduction with Geometric Application}, Cambridge University Press, {\bf{60}}, Cambridge, (1998).

\bibitem{BH}
W. Bruns and J. Herzog,  \emph{Cohen-Macaulay rings}, Cambridge University Press, {\bf{39}}, Cambridge, (1998).


\bibitem{BG}
W. Bruns and J. Gubeladze, \emph{Polytopes, rings and K-theory}, Springer Monographs in Mathematics, Springer, Dordrecht, (2009).

\bibitem{CE}
H. Cartan and S. Eilenberg, \emph { Homological
algebra}, Princeton University Press,  1956.

\bibitem{DF}
D. Dobbs and M. Fontana, \emph{Going-up, direct limits and universality}, Comment. Math. Univ. St. Paul., {\bf{33}}, (1984), 191-196.

\bibitem{D}
L. Doretti, \emph{ A note on the flat inductive limit of P -rings},
Boll. Un. Mat. Ital. D, {\bf{2}} (6), (1983),  29–-39.


\bibitem{Gi}
R. Gilmer,  \emph{ Commutative semigroup rings}, Chicago Lectures in Mathematics, University of Chicago Press, Chicago, IL, (1984).

\bibitem{G}
S. Glaz, \emph{ Commutative coherent rings}, Springer LNM, {\bf{1371}}, Spriger Verlag, (1989).



\bibitem{Ha}
T.D. Hamilton, \emph{  Weak Bourbaki
unmixed rings: A step towards non-Noetherian Cohen-Macaulayness},
Rocky Mountain J. Math., {\bf34}(3), (2004), 963–-977.

\bibitem{HM}
T.D. Hamilton and T. Marley, \emph{Non-Noetherian Cohen-Macaulay rings}, J. Algebra, {\bf{307}} (2007), 343–-360.


\bibitem{H}
M. Hochster, \emph{Rings of invariants of tori, Cohen-Macaulay rings generated by monomials, and polytopes},
Ann. of Math., {\bf{96}}, (1972), 318–-337.



\bibitem{M}
J. Marot, \emph{Limite inductive plate de $P$-anneaux}, J. Algebra {\bf{57}}, (1979),  484--496.

\bibitem{MR}
J. Majadas and Antonio G. Rodicio,  \emph{Smoothness, regularity and complete intersection}, London Mathematical Society Lecture Note Series, {\bf{ 373}}, Cambridge University Press, Cambridge, (2010).


\bibitem{Mat}
H. Matsumura, \emph{Commutative Ring Theory}, Cambridge Studies in Advanced Math, \textbf{8}, (1986).


\bibitem{O}
T. Ogoma, \emph{Noetherian property of inductive limits of Noetherian local rings},
Proc. Japan Acad., Ser. A \textbf{{67}}(3), 68--69 (1991).


\bibitem{P}
D. Popescu, \emph{General N\'{e}ron desingularization},  Nagoya Math. J., \textbf{{100}}, (1985), 97–-126.

\bibitem{Sch}
P. Schenzel,   \emph{
Proregular sequences, local cohomology, and completion}, Math.
Scand., {\bf92}(2), (2003), 271–-289.


\bibitem{S}
Richard G. Swan,  \emph{N\'{e}ron-Popescu desingularization}, Algebra and geometry (Taipei, 1995), 135–-192, Lect. Algebra Geom., 2, Int. Press, Cambridge, MA, (1998).

\bibitem{V}
M. Vaqui\'{e}, \emph {Valuations and local uniformization}, Singularity theory and its applications, 477–-527, Adv. Stud. Pure Math., {\bf{43}}, Math. Soc. Japan, Tokyo, (2006).


\bibitem{Z}
Oscar Zariski,  \emph{Local uniformization on algebraic varieties}, Ann. of Math. (2),  \textbf{{141}}, (1940). 852–-896.

\end{thebibliography}
\end{document}